\documentclass{amsart}
\usepackage{amssymb,multicol}

\usepackage[all]{xy}

\usepackage{hyperref}
\hypersetup{
    colorlinks=true,
    linkcolor=blue,
    filecolor=blue,      
    urlcolor=cyan,
    citecolor=blue,
}

\usepackage{tikz}
\usetikzlibrary{tqft}
\usetikzlibrary{arrows.meta, decorations.pathreplacing}

\newtheorem{thm}[subsection]{Theorem}
\newtheorem{cor}[subsection]{Corollary}
\newtheorem{lem}[subsection]{Lemma}
\newtheorem{prop}[subsection]{Proposition}

\newcommand{\sta}{\stackrel}
\newcommand{\ci}{{\circ}}
\newcommand{\op}{\operatorname}
\newcommand {\CC} {\mathbb{C}}
\newcommand{\ben}{\begin{equation}}
\newcommand{\een}{\end{equation}}
\newcommand{\ZZ}{\mathbb{Z}}
\newcommand{\QQ}{\mathbb{Q}}
\newcommand{\K}{Kunyavski\u\i }
\newcommand{\defeq}{\stackrel{\text{def}}{=}}

\DeclareMathOperator{\GL}{GL}
\DeclareMathOperator{\Tor}{Tor}

\begin{document}

\title[Topological perspectives on some Bogomolov multipliers]{Topological perspectives on the vanishing of some Bogomolov multipliers}

\author{Eric Samperton}
\address{Purdue University, West Lafayette, Indiana, USA}
\email{eric@purdue.edu}
\thanks{ES supported in part by NSF CCF 2330130}

\author{Carlos Segovia}
\address{SECIHTI-UNAM-Oaxaca,  M\'{e}xico}
\email{csegovia@im.unam.mx}
\thanks{CS supported by Investigadores por M\'exico SECIHTI.}


\date{August 14, 2026}

\begin{abstract}
Since the 1980s, the Bogomolov multiplier of a finite group has been known to obstruct rationality in complex algebraic geometry, and more recently it is understood to be responsible for any torsion in the oriented and stable unitary 2-dimensional $G$-equivariant bordism groups $\Omega_2^{SO,G}$ and $\Omega_2^{U,G}$.
In this note, as a small step toward building a bridge between these two far-flung roles, we discuss  the vanishing of Bogomolov multipliers of two specific families of finite groups.
First, we revisit Kunyavski\u\i's result that the Bogomolov multipliers of all finite simple groups vanish, taking inspiration from the low-dimensional topological interpretation of the Ore conjecture.
Second, in lieu of arguments in complex birational geometry (such as the hard direction of the Chevalley–Shephard–Todd theorem), we combine cut-and-paste combinatorial-topological techniques with elementary calculations of Ihara-Yokonuma to show that all finite Coxeter groups have vanishing Bogomolov multiplier.
\end{abstract}

\maketitle

\section*{Introduction}
\label{intro}
The Noether problem over a field $k$, first formulated in 1918, asks: given a finite group $G$ and a (faithful) representation of $G$ on a vector space $V$ over $k$, is the quotient algebraic variety $V/G$ rational?
Noether's motivation was the case $k=\mathbb{Q}$, since a positive solution for $G$ here implies a positive solution for $G$ in the inverse Galois problem \cite{Noether}.
However, Swan later showed that for $G=\ZZ/47\ZZ$ the Noether problem over $\mathbb{Q}$ does not have a positive solution \cite{Swan}, even though $G$ (being abelian) \emph{does} have a positive solution to the inverse Galois problem.
This instigated further inquiry, leading to further negative solutions to Noether's problem over other fields $k$, and culminating in Saltman's examples of finite groups $G$ admitting representations on \emph{complex}---that is, $k=\CC$---vector spaces $V$ such that $V/G$ is not rational \cite{Sal84}.

To this end, Saltman introduced the so-called unramified Brauer group 
\[Br_v(\CC(V/G)) \le Br(\CC(V/G))\]
of the variety $V/G$.
He showed that $Br_v(\CC(V/G))$, like $Br(\CC(V/G))$,  is a birational invariant, and that there exist groups with representations where $Br_v(V/G) \ne 0$.
Shortly thereafter, Bogomolov showed in \cite{Bo87} that the somewhat unwieldy definition of $Br_v(V/G)$ could be identified with
\[ B_0(G) \defeq \{ \omega \in H^2(G, \mathbb{Q}/\ZZ) \mid \omega|_A = 0 \in H^2(A, \mathbb{Q}/\ZZ), \ \forall A \le G \text{ abelian}\}.\]
Being a subgroup of the (Pontryagin dual of the) Schur multiplier $H^2(G, \mathbb{Q}/\ZZ)$, \K\ named $Br_v(\CC(V/G)) = B_0(G)$ the \emph{Bogomolov multiplier} of $G$ \cite{K}.
We comment more on this terminology in a moment.

Just as $H^2(G, \QQ/\ZZ)$ is Pontryagin dual to the Schur multiplier \[ M(G) \defeq H_2(G, \ZZ),\]
Moravec \cite{Mo12} identifies $B_0(G)$ with the Pontryagin dual of
\[\tilde{B}_0(G) \defeq M(G)/M_0(G), \]
where $M_0(G)$ is defined as the subgroup of $M(G)$ generated by all elements of the form $x \wedge y$ (via the natural identification of $M(G)$ with the kernel of $G \wedge G \to [G,G]$ \cite{Mi52}; see Section 2 for more discussion).
While finite abelian groups are indeed isomorphic to their Pontryagin duals, as far as we are aware there is no \emph{natural} isomorphism between $B_0(G)$ and $\tilde{B}_0(G)$.
So, given that $M(G) = H_2(G, \ZZ)$ is ``the" Schur multiplier, in this work, our preferred definition of ``the" Bogomolov multiplier is
\[ B(G) \defeq \tilde{B}_0(G).\]
As justification for this preference, we note that Moravec shows that $B(G)$ satisfies properties analogous to the fact that $M(G)$ is the kernel of any minimal order stem extension of $G$ (see Section 1) \cite{Mo12}.

Note that $x \wedge y \mapsto 1 \in [G,G]$ if and only if $x$ and $y$ commute, and pairs of commuting elements of $G$ are in bijection with homomomorphisms
\[ f: \pi_1(S^1 \times S^1) = \ZZ \times \ZZ \to G.\]
In fact, the generators $x \wedge y$ of $M_0(G)$ are precisely the same thing as homology classes in $M(G) = H_2(G, \ZZ) = H_2(BG, \ZZ)$ that are \emph{toral}, meaning they are representable as
\[ f_*[S^1 \times S^1]\]
for some choice of $f: \pi_1(S^1 \times S^1) \to G$ and orientation $[S^1 \times S^1] \in H_2(S^1 \times S^1, \ZZ) \cong \ZZ$.
Therefore our preferred definition of the Bogomolov multiplier $B(G)$ is equivalent to the following: the quotient of the Schur multiplier $M(G)$ by the subgroup generated by all toral classes.

Recently, the first author noticed that certain invariants of branched $G$-covers studied in \cite{Brand,Sa20} (referred to as ``$G$-branched Schur invariants" in the latter) are valued in (torsors over) the Bogomolov multiplier \cite{Sa22}.
Using this perspective, he found counterexamples to an old conjecture in equivariant bordism, which had been revived by Uribe in \cite{Ur18} as the ``unitary evenness conjecture.''
The now-disproved conjecture states: the $G$-equivariant stable unitary bordism group $\Omega_*^{U,G}$ is a free $\Omega^U_*$-module on even-dimensional generators whenever $G$ is a compact Lie group.\footnote{
We note that Sophie Kriz has independently found counterexamples to the homotopical version of the conjecture \cite{Kriz} by identifying groups where $(MU_G)_*$ has support in odd degree.
As far as we are aware, it is an interesting open question whether there are any $G$ where $\Omega_*^{U,G}$ has such a property. 
}
The main result of \cite{Sa22} implies that for any odd order group $G$ with $B(G) \ne 0$, the 2-dimensional equivariant bordism group $\Omega_2^{U,G}$ has torsion.
The result also implies a uniform explanation of the earlier results of \cite{DS22}, which showed, via case-by-case cut-and-paste topological methods, that for all finite abelian, dihedral, symmetric and alternating groups $G$, every \emph{free} action of $G$ on an oriented surface is trivial in $\Omega_2^{SO,G}$.

Building on the method of Samperton's counterexamples, Angel, Samperton, Segovia and Uribe then showed in \cite{ASSU23} that for any finite group
\[ \Tor(\Omega_2^{SO,G}) = \Tor(\Omega_2^{U,G}) = \bigoplus_{(K) \le G} B(N_G(K)/K),\]
where the direct sum is taken over all conjugacy classes of subgroups of $G$.
In particular, \cite{ASSU23} removes the odd order assumption from the main result of \cite{Sa22}.

Given the central role that the Bogomolov multiplier has played in two far flung subfields---complex birational geometry in one case, torsion in equivariant bordism in the other---it is natural to wonder if there are any deeper connections between these two subjects that can be teased out.
In our minds, there remains an unexplained conceptual gap between these two roles of $B(G)$.

The proper way to understand this gap is presumably to combine a careful reading of \cite{Bo87} with sufficient expertise in equivariant bordism.
But we leave this for future efforts.
In this work, as a smaller step, we revisit two sets of vanishing results for $B(G)$ that to date have only been considered from the algebro-geometric setting, but not the low-dimensional topological/bordism theoretic.

In Section 2, we put \K's proof that $B(G) = 0$ for all finite simple groups $G$ into the context of the now-proved Ore conjecture \cite{Ore,LOST1,LOST2}, which has a direct topological interpretation that we review.

In Section 3, we show that $B(G)=0$ for all finite Coxeter groups $G$ (and indeed, all complex reflection groups).
We first explain how, from the birational geometry side, the Chevalley-Shephard-Todd theorem makes this ``obvious."
We then generalize the cut-and-paste topological methods of \cite{DS22} to give a direct ``topological" argument that builds on elementary calculations of $M(G)$ due to Ihara and Yokonuma \cite{IY65}.

\subsection*{Acknowledgments}
Google Gemini was used to help prepare Figures \ref{f:cut} and \ref{f:cap}.
Otherwise, no AI was used in the preparation of this manuscript.
We thank Marco Boggi and Bernardo Uribe for helpful conversations.

\section{Simple groups}
In this section we prove
\begin{thm}[\cite{K}]
\label{thm:K}
If $G$ is a finite simple group, then its Bogomolov multiplier $B(G)=0$ vanishes.
\end{thm}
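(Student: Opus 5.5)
The plan is to use the Ore conjecture, proved by Liebeck--O'Brien--Shalev--Tiep \cite{LOST1,LOST2}: in a finite non-abelian simple group $G$ every element is a single commutator, $g=[x,y]$. If $G$ is abelian simple, then $G=\ZZ/p\ZZ$ and $M(G)=H_2(\ZZ/p\ZZ,\ZZ)=0$, so $B(G)=0$ trivially. We may therefore assume $G$ is non-abelian simple.

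Next, use Miller's description $M(G)=\ker(G\wedge G\to[G,G])$ \cite{Mi52}, where $G\wedge G$ is the nonabelian exterior square. A general element of $M(G)$ is a product $\prod_i (x_i\wedge y_i)^{\pm1}$ with trivial product of commutators. Topologically, such an element is $f_*[\Sigma]$, where $f\colon\pi_1(\Sigma_g)\to G$ and $\Sigma$ is a closed oriented surface of some genus $g$. The goal is to show that every such class is a sum of toral classes. For this I would use the cut-and-paste picture. Write the relation as $[x_1,y_1]\cdots[x_g,y_g]=1$. Set $c=[x_1,y_1]$, so that $c^{-1}=[x_2,y_2]\cdots[x_g,y_g]$. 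Cut $\Sigma_g$ along a separating curve into a genus-$1$ piece with one boundary component, on which the boundary monodromy is $c$, and a genus-$(g-1)$ piece with one boundary component.

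By Ore, $c^{-1}=[u,v]$ for some $u,v\in G$. A genus-one surface with one boundary component and monodromy $(u,v)$ therefore caps off the boundary labelled $c^{-1}$, or $c$ after reversing orientation. Gluing it in two ways splits the class. The genus-$1$ part, capped by $(u,v)$ reversed, gives a closed genus-$2$ surface. The genus-$(g-1)$ part, capped by $(u,v)$, gives a closed genus-$g$ surface. The original class equals the sum of these two, because the two inserted caps cancel in homology (they form a boundary of the cylinder over the cap). Algebraically, in $G\wedge G$ the identity reads
\[(x_1\wedge y_1)\cdots(x_g\wedge y_g)=\big[(x_1\wedge y_1)(u\wedge v)\big]\cdot\big[(u\wedge v)^{-1}(x_2\wedge y_2)\cdots(x_g\wedge y_g)\big].\]
Here $[x_1,y_1][u,v]=1$ and $[u,v]^{-1}[x_2,y_2]\cdots=1$, and the factor $(u\wedge v)^{-1}$ is represented as $v\wedge u$.

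This alone does not reduce genus, since the first piece has genus $2$. The key refinement is therefore to reduce to genus-$2$ classes and then show that every genus-$2$ class is a sum of toral classes. For genus two, $[x_1,y_1]=[y_2,x_2]=c$, so the class is $(x_1\wedge y_1)(y_2\wedge x_2)^{-1}$. I would try to show this lies in $M_0(G)$ using the exterior square identities, namely compatibility with conjugation and $(xy\wedge z)={}^x(y\wedge z)(x\wedge z)$. The two pairs $(x_1,y_1)$ and $(y_2,x_2)$ have the same commutator, and the hope is to connect them by a sequence of moves (Nielsen/Dehn-twist moves and conjugation) in which each step changes the class by a commuting-pair term $a\wedge b$ with $[a,b]=1$.

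This last step is the main obstacle. Ore's theorem alone gives only surjectivity of the commutator map, not any connectivity of its fibres. The fallback, which I expect is what \cite{K} does, is to avoid the fibre analysis by using the classification. For the Lie type groups one uses $B(G)\le$ a quotient of $M(G)$, where $M(G)$ is known explicitly and small. For each generator of $M(G)$ one exhibits a commuting pair in the Schur cover whose commutator hits it; equivalently, the central extension restricted to some abelian subgroup is nonsplit, which shows the class is toral. The alternating and sporadic groups would be handled case by case in the same way. In either approach, the decisive point is that every class in $M(G)$ is toral.
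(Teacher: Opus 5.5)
Your proposal has a genuine gap: neither route you describe is carried out.

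\textbf{The cut-and-paste route.} The surface-splitting argument based on Ore's theorem for $G$ never lowers the genus, as you note. The ``reduction to genus-$2$ classes'' you then invoke is not established either. Ore for $G$ only constrains \emph{boundaries} of surfaces in $BG$, and the best that cutting and capping yields is genus-$3$ generators (Proposition \ref{prop:genus3}).

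\textbf{The fallback.} Saying ``for each generator of $M(G)$ exhibit a suitable pair,'' case by case over infinitely many groups of Lie type and alternating groups, essentially restates the theorem. It gives no mechanism for producing the pairs.

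\textbf{The missing idea.} Apply the Ore property to a Schur cover $1\to M(G)\to C\to G\to 1$ instead of to $G$.
\begin{enumerate}
\item For a commuting pair $a,b\in G$ with arbitrary lifts $\tilde a,\tilde b\in C$, the toral class is exactly $[\tilde a,\tilde b]\in\ker(C\to G)$. Hence $B(G)=0$ if and only if $\ker(C\to G)=Z(C)$ is generated by elements that are commutators in $C$ (Lemma \ref{lem:cover}).
\item The quasisimple extension of Ore's conjecture (Theorem \ref{thm:qs}, due to Blau and Liebeck--O'Brien--Shalev--Tiep) says every element of $Z(C)$ is a commutator in $C$, except for seven groups $G$.
\item Those seven are finished as follows. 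For $A_6$, $A_7$, $L_3(4)$ and $M_{22}$, a HAP computation confirms vanishing. For $U_4(3)$, $Fi_{22}$ and ${}^2E_6(2)$, the non-commutator central elements have order $6$ or $12$. But $M(G)$ is generated by its elements of prime-power order, and all of these are therefore commutators.
\end{enumerate}
This is what turns an infinite verification into a finite one.

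Two statements in your fallback paragraph also need correcting.
\begin{itemize}
\item A ``commuting pair in the Schur cover'' has trivial commutator. What you want is a pair in $C$ whose images commute in $G$.
\item Being toral is not equivalent to the Schur cover being nonsplit over some abelian subgroup. A nonsplit central extension of an abelian group can itself be abelian, e.g.\ $1\to\ZZ/2\ZZ\to\ZZ/4\ZZ\to\ZZ/2\ZZ\to 1$. A toral class, by contrast, shows up as a \emph{nonabelian} preimage of a subgroup $\langle a,b\rangle$. The correct dual criterion is that every nonzero class in $H^2(G,\QQ/\ZZ)$ restricts nontrivially to some abelian subgroup.
\end{itemize}
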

\noindent In doing so our goal is to streamline the idea of \K's proof by placing it in the context of the Ore conjecture.
Admittedly, in the details our proof boils down to the same final calculations as \K;
however, we aim to explain the proof in a way that makes it feel ``inevitable" from the topological perspective (at least, given one of the headline results about finite simple groups).

To begin our retelling of this tale, we first recall the Ore conjecture, which is now a known theorem\footnote{We note that the proof depends on the classification of finite simple groups.}:

\begin{thm}[Ore conjecture \cite{Ore}, proved in \cite{LOST1}]
\label{thm:Ore}
If $G$ is a finite, non-abelian simple group, then every element of $G$ is a commutator.  That is, for all $g \in G$ there exist $x,y \in G$ such that $g = [x,y]=xyx^{-1}y^{-1}$. \qed
\end{thm}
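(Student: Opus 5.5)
The proof will be character-theoretic, made to work by running through the classification of finite simple groups. The starting point is Frobenius' classical count: for $g \in G$, the number of pairs $(x,y) \in G \times G$ with $[x,y]=g$ equals
\[ N(g) = |G| \sum_{\chi \in \op{Irr}(G)} \frac{\chi(g)}{\chi(1)}. \]
Topologically, $N(g)$ counts homomorphisms from $\pi_1$ of a once-punctured torus to $G$ that send the boundary loop to $g$. Hence $g$ is a commutator if and only if this sum is nonzero. The trivial character contributes $1$, so it suffices to show
\[ \Bigl| \sum_{1 \ne \chi \in \op{Irr}(G)} \frac{\chi(g)}{\chi(1)} \Bigr| < 1 \]
for every $g$. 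Alternatively, for specific $g$ one can exhibit $x,y$ directly. The plan is to use the classification and treat each family by whichever of these two methods is cheaper.

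\emph{Alternating and sporadic groups.} For $A_n$, I would argue combinatorially, following Ore and Miller. Write each even permutation as a product of two $n$-cycles, or of suitable cycle types, lying in a single conjugacy class of $A_n$. Since $g = x\,(y x^{-1} y^{-1})$, and $y x^{-1}y^{-1}$ ranges over the class of $x^{-1}$, this shows $g$ is a commutator. The point to check is that the class does not split in $A_n$, which is a routine but fiddly induction on cycle type. For the $26$ sporadic groups (and the Tits group), the character tables are known, and the Frobenius sum can be evaluated class by class by computer.

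\emph{Groups of Lie type.} This is the heart of the argument. I would split elements into regular semisimple ones and the rest.
\begin{enumerate}
\item For a regular semisimple $g$ in a torus $T$, Deligne--Lusztig theory gives $|\chi(g)|$ bounded in terms of the Weyl group alone. The degrees $\chi(1)$ of nontrivial characters grow like a power of $q$ (Landazuri--Seitz type bounds), while the number of characters is about $q^{\text{rank}}$. Together these give the inequality once $q$ or the rank is large.
\item For arbitrary $g$, I would reduce via a Gow-type lemma: in a group of Lie type, each semisimple element is a product of two regular semisimple elements drawn from two prescribed conjugacy classes. For unipotent and mixed elements, I would instead induct through Levi subgroups, realising $g$ as a commutator inside a smaller classical group containing it.
\item For large rank classical groups, this induction should be run systematically, embedding $g$ into a product of lower-rank blocks where the statement is already known.
\end{enumerate}

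The main obstacle will be the groups of Lie type of small rank over small fields, and the exceptional types. There the asymptotic character-ratio bounds are too weak, so the sum may not be provably less than $1$ by estimates. For these I would first try explicit Deligne--Lusztig character values (via CHEVIE) to evaluate the Frobenius sum exactly on the problematic classes. Only for a finite residual list of small groups would I fall back on a direct computer search for $x,y$.
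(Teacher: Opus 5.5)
The paper gives no proof of this statement: it is quoted from \cite{LOST1} and rests on the classification. The relevant comparison is therefore with the Liebeck--O'Brien--Shalev--Tiep argument. Your outline broadly follows its architecture: Frobenius' formula, Miller and Ore for $A_n$, character tables for the sporadic groups, Gow's theorem for semisimple elements, induction on rank, and computation for a finite base.

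As written, though, your treatment of groups of Lie type has a genuine gap, at the step ``for unipotent and mixed elements, induct through Levi subgroups.'' A regular unipotent element lies in no proper Levi subgroup at all, since it centralizes no non-central torus. More generally, an element acting indecomposably on the natural module preserves no nontrivial direct-sum or orthogonal decomposition, so there is no smaller classical group to pass to. Such elements exist in every rank and for every $q$. Moreover, proper Levi subgroups are in general not perfect, so ``every element is a commutator'' is not an inductive hypothesis you can apply to them.

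A workable induction must be set up for quasisimple matrix groups ($SL_n(q)$, $Sp_{2n}(q)$, $SU_n(q)$, $\Omega^{\pm}_n(q)$). It then applies only to ``breakable'' elements, meaning those that split as a sum of blocks each lying in a smaller group of the same kind. This imposes determinant and spinor-norm conditions, and requires avoiding non-perfect small blocks such as $SL_2(2)$, $SL_2(3)$, $SU_3(2)$, $Sp_4(2)$. For the unbreakable elements your proposal has no method, so the claim that only a finite residual list remains is unsupported: for fixed small $q$ the rank is unbounded. For large $q$, LOST rely on the separate Gauss-decomposition argument of Ellers--Gordeev, which your estimates do not replace.

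The missing ingredient is a character estimate adapted to unbreakable $g$, exploiting the fact that such elements have small centralizers. Your Step~1 does not provide it, and it does not even close for regular semisimple $g$. In type $A_{n-1}$ the Landazuri--Seitz bound and the number of characters are both of order $q^{n-1}$, so the crude estimate is of order $|W|$, not $<1$. This is harmless only because Gow's theorem, applied to a regular semisimple class $C$ and to $C^{-1}$, already makes every semisimple element a commutator.

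LOST instead prove $g\in s^G(s^{-1})^G$ for a carefully chosen regular semisimple $s$, i.e.\ they show $\sum_{\chi}|\chi(s)|^2\,\overline{\chi(g)}/\chi(1)\neq 0$. Taking $s$ in a suitable (e.g.\ Coxeter-type) maximal torus, Deligne--Lusztig theory forces $\chi(s)=0$ for all but a sparse set of characters, whose low-degree members are explicitly controlled. Meanwhile $|\chi(g)|\le |C_G(g)|^{1/2}$ is small precisely because $g$ is unbreakable.

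A smaller slip: in the alternating case, $n$-cycles cannot be used in your non-splitting argument. They are odd for even $n$, and their class splits in $A_n$ for odd $n$. You need cycle types whose $S_n$-class remains a single $A_n$-class.
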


Non-abelian simple groups are perfect---$G=[G,G]$---so every element of such a group is a \emph{product} of commutators.\footnote{In the sequel, we may occasionally say ``simple group" when really we ought to say ``non-abelian finite simple group."}
But as we warn our undergraduate math majors: not every element of the commutator subgroup is itself a commutator!
The interesting content of the Ore conjecture is that every element of a non-abelian simple group really does equal some commutator outright.

We can rephrase Theorem \ref{thm:Ore} in terms of bordism in the following elementary way.
For any perfect group $G=[G,G]$, we have 
\[ 0 = G_{ab} = G/[G,G] = H_1(G;\ZZ)=H_1(BG;\ZZ) = \Omega_1^{SO}(BG) ,\]
where $BG$ is the classifying space of $G$.\footnote{
We note that $H_1(BG;\ZZ)$ does not technically \emph{equal} $\Omega_1^{SO}(BG)$, but since there is a canonical isomorphism, we only slightly abuse notation.
Similarly, $H_2(BG;\ZZ) = \Omega_2^{SO}(BG)$ and $H_3(BG;\ZZ) = \Omega_3^{SO}(BG)$.
}
Thus, every loop $\gamma$ in $BG$ is the boundary of some oriented surface $S$ mapped into $BG$.
If $S$ is genus $g$ with one boundary component $\partial S \cong S^1$, we put our basepoint on the boundary, and write $\pi_1(S)$ as the free group on the usual generators $a_1,b_1,\dots,a_g,b_g$, then the boundary loop is
\[ \partial S = [a_1,b_1][a_2,b_2]\cdots[a_g,b_g].\]
Defining the \emph{commutator length} of $g \in G=[G,G]$ to be
\[ cl(g) \defeq \min\{g \in \mathbb{N} \mid \exists a_1,b_1,\dots,a_g,b_g \in G: g = \prod_{i=1}^g [a_i,b_i]\},\]
we then see 
\[ cl(g) = \min\{g \in \mathbb{N} \mid g \text{ bounds a genus $g$ surface in $BG$}\}.\]
Therefore, the Ore conjecture is equivalent to the fact that for a non-abelian simple group $G$, every loop in $BG$ bounds an oriented surface $S$ \emph{that has genus $1$}.

Given that the Bogomolov multiplier $B_0(G)$ is precisely the quotient of $M(G)=H_2(BG;\ZZ) = \Omega_2^{SO}(BG)$ by the subgroup generated by all toral---\emph{i.e.}\ ``genus 1"---classes the reader is presumably starting to suss out some connection to Theorem \ref{thm:K}.
As it turns out, we will ultimately want an extended version of the Ore conjecture for \emph{quasi-simple} groups in order to prove Theorem \ref{thm:K}.
However, to give a flavor of where we're headed, we first give a topological proof, using Theorem \ref{thm:Ore}, of the following warm-up

\begin{prop}[Warm-up to Theorem \ref{thm:K}]
\label{prop:genus3}
Every finite simple group $G$ has a Schur multiplier $M(G)=H_2(G;\mathbb{Z}) = \Omega_2^{SO}(BG)$ generated by surfaces of genus 3.
\end{prop}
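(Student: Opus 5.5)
Every class in $M(G)=\Omega_2^{SO}(BG)$ is represented by a closed oriented surface $\Sigma_h$ of some genus $h$ with a map to $BG$, that is, by a homomorphism $f:\pi_1(\Sigma_h)\to G$. The plan is to reduce the genus by cut-and-paste using Theorem \ref{thm:Ore}. The bordism class is then written as a sum of classes of genus at most $3$, and since $G$ is perfect we can pad genus upward to exactly $3$.

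\textbf{Padding.} Any class of genus less than $3$ can be viewed as one of genus $3$. Take its connected sum with a torus mapped trivially to $BG$. The trivially mapped torus bounds a trivially mapped solid handlebody, so the bordism class does not change. Hence it suffices to show that classes of genus at most $3$ generate $M(G)$. If $G$ is abelian simple (cyclic of prime order), then $M(G)=0$ and there is nothing to prove, so assume $G$ is non-abelian.

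\textbf{Genus reduction.} Let $h\ge 4$ and write $\pi_1(\Sigma_h)$ with the standard generators, so that $f$ gives elements $a_i,b_i\in G$ with $\prod_{i=1}^h[a_i,b_i]=1$. Cut $\Sigma_h$ along a separating curve $\gamma$ into two pieces: $S'$ of genus $2$ carrying $a_1,b_1,a_2,b_2$, and $S''$ of genus $h-2$ carrying the rest. Both have one boundary circle. The boundary loop is sent to $g=[a_1,b_1][a_2,b_2]\in G$. By Theorem \ref{thm:Ore} there exist $x,y\in G$ with $g^{-1}=[x,y]$, or equivalently $[y,x]=g$. Let $T$ be a genus-one surface with one boundary circle, mapped to $BG$ via $x,y$ so that its boundary carries $g^{\pm1}$ with the orientation needed for gluing. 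Glue a copy of $T$ to $S'$ along $\gamma$, and glue $\overline{T}$ (the orientation reversal) to $S''$. This produces closed surfaces $\Sigma'$ of genus $3$ and $\Sigma''$ of genus $h-1$, each with a map to $BG$ that extends over the glued piece.

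\textbf{Bordism relation.} We need $[\Sigma_h]=[\Sigma']+[\Sigma'']$ in $\Omega_2^{SO}(BG)$. The standard cut-and-paste bordism gives this. Take $\Sigma_h\times[0,1]$ and attach, along a collar of $\gamma$ in $\Sigma_h\times\{1\}$, a copy of $T\times[-1,1]$, a $3$-dimensional $1$-handle-like piece whose boundary contains $T\cup_\partial\overline{T}$ together with an annulus. The map to $BG$ extends because the map on $T$ was chosen to agree with $f$ on $\gamma$. The boundary of this $3$-manifold is $-\Sigma_h\sqcup\Sigma'\sqcup\Sigma''$.

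\textbf{Induction.} By induction on $h$, every class is a sum of classes of genus at most $3$, and padding then makes all of them genus $3$. The main obstacle is checking orientations and basepoint conventions carefully, so that the glued map is well defined and the bordism has exactly the stated boundary. The inequality $3$ is forced by cutting off $2$ handles and adding $1$ from Ore. Cutting off a single handle instead would yield genus-$2$ pieces with boundary a single commutator, and would terminate at genus $2$. However, the inductive step then needs $h-1+1=h$, which makes no progress, so the genus-$2$ cut is the one that reduces the genus.
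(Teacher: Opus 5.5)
Your argument is correct and is essentially the paper's: cut along separating curves, cap each new boundary circle with a genus-one surface (and its orientation reverse on the other side) using Theorem~\ref{thm:Ore}, apply the standard cut-and-paste bordism, and stabilize with trivially mapped handles. The only differences are bookkeeping---you peel off one genus-two piece per inductive step, whereas the paper cuts all at once into genus-one pieces (the two-boundary middle pieces cap to genus $3$, the end pieces cap to genus $2$ and are then stabilized)---and you also explicitly handle the cyclic case $G\cong\ZZ/p\ZZ$, which the paper leaves implicit.
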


Of course we would prove Theorem \ref{thm:K} if we could improve ``genus 3" in this proposition to ``genus 1".

\begin{proof}
Since $H_2(G;\ZZ) = H_2(BG; \ZZ)= \Omega_2^{SO}(BG)$, every homology class can be represented as $f_*[S]$ where $f: S \to BG$ is a continuous map from a closed, genus $g$ surface with orientation class $[S] \in H_2(S; \ZZ) \cong \ZZ$.
Since $BG = K(G,1)$ is an Eilenberg-MacLane space, the homotopy class of $f$ is determined by the induced group homomorphism
\[ f_\#: \pi_1(S) \to \pi_1(BG) = G.\]
Recall that the surface group has presentation
\[ \pi_1(S) = \langle \alpha_1,\beta_1,\dots,\alpha_g,\beta_g \mid \prod_{i=1}^g [\alpha_i,\beta_i] = 1\rangle.\]
Let $\{\delta_j\}_{j=1}^{g-1}$ be a collection of disjoint simple closed curves with $\delta_j$ in the same free homotopy class as
\[ \prod_{i=1}^{j} [\alpha_i,\beta_i] \in \pi_1(S)\]
as in Figure \ref{f:cut}.
Cutting $S$ along all of the $\delta_j$ yields a disjoint union of genus 1 surfaces
\[ S_1 \sqcup S_2 \sqcup \cdots \sqcup S_{g-2} \sqcup S_{g-1} \]
where $S_1$ and $S_{g-1}$ each have one boundary component and each of $S_2, \cdots, S_{g-2}$ has two boundary components.

\begin{figure}
\centering
\begin{tikzpicture}

    \colorlet{surfacecolor}{gray!15}
    \colorlet{cutcolor}{black}
    \colorlet{boundcolor}{black}

    \def\drawhole#1{
        \begin{scope}[shift={(#1, 0)}]
            \draw[thick] (-0.5, 0.1) to[bend right=50] (0.5, 0.1);
            \draw[thick] (-0.35, -0.05) to[bend left=45] (0.35, -0.05);
        \end{scope}
    }

    \begin{scope}[shift={(0, 3)}]
        \node[font=] at (5, 1) {$S$};

            
        \draw[thick] (-3.75, 1) -- (3.75, 1);
        \draw[thick] (-3.75, -1) -- (3.75, -1);
        \draw[thick] (-3.75, 1) arc (90:270:1);
        \draw[thick] (3.75, 1) arc (90:-90:1);

        \drawhole{-3.75}
        \drawhole{-1.25}
        \drawhole{1.25}
        \drawhole{3.75}

        \foreach \x in {-2.5, 0, 2.5} {
            \draw[very thick, cutcolor, dashed] (\x, 1) arc (90:270:0.2 and 1); 
            \draw[very thick, cutcolor] (\x, -1) arc (-90:90:0.2 and 1);        
        }
        \node at (-2.5,-1.4) {$\delta_1$};
        \node at (0,-1.4) {$\cdots$};
        \node at (2.5,-1.4) {$\delta_{g-1}$};
    \end{scope}

    \draw[-{Stealth[scale=1]}, line width=1.5pt, darkgray] 
        (0, 1.3) -- (0, .3);


    \begin{scope}[shift={(-5.25, -1)}]
        \draw[thick] (0, 1) -- (1.25, 1);
        \draw[thick] (0, -1) -- (1.25, -1);
        \draw[thick] (0, 1) arc (90:270:1); 
        \draw[very thick, boundcolor, dashed] (1.25, 1) arc (90:270:0.2 and 1);
        \draw[very thick, boundcolor] (1.25, -1) arc (-90:90:0.2 and 1);
        \drawhole{0}
        \node[align=center] at (0, -1.5) {$S_1$};
    \end{scope}

    \begin{scope}[shift={(-1.75, -1)}]
        \fill[white] (-1.25, 0) ellipse (0.2 and 1);
        \draw[thick] (-1.25, 1) -- (1.25, 1);
        \draw[thick] (-1.25, -1) -- (1.25, -1);
        \draw[very thick, boundcolor] (-1.25, 0) ellipse (0.2 and 1);
        \draw[very thick, boundcolor, dashed] (1.25, 1) arc (90:270:0.2 and 1);
        \draw[very thick, boundcolor] (1.25, -1) arc (-90:90:0.2 and 1);
        \drawhole{0}
        \node[align=center] at (0, -1.5) {$S_2$};
    \end{scope}

    \begin{scope}[shift={(1.75, -1)}]
        \fill[white] (-1.25, 0) ellipse (0.2 and 1);
        \draw[thick] (-1.25, 1) -- (1.25, 1);
        \draw[thick] (-1.25, -1) -- (1.25, -1);
        \draw[very thick, boundcolor] (-1.25, 0) ellipse (0.2 and 1);
        \draw[very thick, boundcolor, dashed] (1.25, 1) arc (90:270:0.2 and 1);
        \draw[very thick, boundcolor] (1.25, -1) arc (-90:90:0.2 and 1);
        \drawhole{0}
        \node[align=center] at (0, -1.5) {$\cdots$};
    \end{scope}

    \begin{scope}[shift={(5.25, -1)}]
        \fill[white] (-1.25, 0) ellipse (0.2 and 1);
        \draw[thick] (-1.25, 1) -- (0, 1);
        \draw[thick] (-1.25, -1) -- (0, -1);
        \draw[thick] (0, 1) arc (90:-90:1); 
        \draw[very thick, boundcolor] (-1.25, 0) ellipse (0.2 and 1);
        \drawhole{0}
        \node[align=center] at (0, -1.5) {$S_g$};
    \end{scope}
\end{tikzpicture}
\caption{Cutting a genus $g$ surface $S$ along the curves $\delta_j$ yields $g-1$ genus 1 surfaces with boundary.}
\label{f:cut}
\end{figure}

\begin{figure}
\centering
\begin{tikzpicture}[scale=.9]
    \colorlet{surfacecolor}{gray!15}
    \colorlet{cutcolor}{black}
    \colorlet{boundcolor}{black}

    \def\drawhole#1{
        \begin{scope}[shift={(#1, 0)}]
            \draw[thick] (-0.5, 0.1) to[bend right=50] (0.5, 0.1);
            \draw[thick] (-0.35, -0.05) to[bend left=45] (0.35, -0.05);
        \end{scope}
    }
    \begin{scope}[shift={(-5.25, -1)}]
        \draw[thick] (0, 1) -- (1.25, 1);
        \draw[thick] (0, -1) -- (1.25, -1);
        \draw[thick] (0, 1) arc (90:270:1); 
        \draw[very thick, boundcolor, dashed] (1.25, 1) arc (90:270:0.2 and 1);
        \draw[very thick, boundcolor] (1.25, -1) arc (-90:90:0.2 and 1);
        \drawhole{0}
        \node[align=center] at (-2, 0) {$S_1$};
        \draw[-{Stealth[scale=1]}, line width=1.5pt, darkgray] 
        (2,0) -- (3, 0);
    \end{scope}
        \begin{scope}[shift={(-1, -1)}]
        \draw[thick] (0, 1) -- (1.25, 1);
        \draw[thick] (0, -1) -- (1.25, -1);
        \draw[thick] (0, 1) arc (90:270:1); 
        \draw[very thick, boundcolor, dashed] (1.25, 1) arc (90:270:0.2 and 1);
        \draw[very thick, boundcolor] (1.25, -1) arc (-90:90:0.2 and 1);
        \drawhole{0}
        \node[align=center] at (3.5, 1) {$\hat{S}_1$};
    \end{scope}
        \begin{scope}[shift={(1.5, -1)}]
        \draw[thick] (-1.25, 1) -- (0, 1);
        \draw[thick] (-1.25, -1) -- (0, -1);
        \draw[thick] (0, 1) arc (90:-90:1); 
        \drawhole{0}
        \end{scope}

    \begin{scope}[shift={(-5.25, -4)}]
        \fill[white] (-1.25, 0) ellipse (0.2 and 1);
        \draw[thick] (-1.25, 1) -- (1.25, 1);
        \draw[thick] (-1.25, -1) -- (1.25, -1);
        \draw[very thick, boundcolor] (-1.25, 0) ellipse (0.2 and 1);
        \draw[very thick, boundcolor, dashed] (1.25, 1) arc (90:270:0.2 and 1);
        \draw[very thick, boundcolor] (1.25, -1) arc (-90:90:0.2 and 1);
        \drawhole{0}
        \node[align=center] at (-2, 0) {$S_j$};
        \draw[-{Stealth[scale=1]}, line width=1.5pt, darkgray] 
        (2,0) -- (3, 0);
    \end{scope}

 \begin{scope}[shift={(-1, -4)}]
        \draw[thick] (0, 1) -- (1.25, 1);
        \draw[thick] (0, -1) -- (1.25, -1);
        \draw[thick] (0, 1) arc (90:270:1); 
        \draw[very thick, boundcolor, dashed] (1.25, 1) arc (90:270:0.2 and 1);
        \draw[very thick, boundcolor] (1.25, -1) arc (-90:90:0.2 and 1);
        \drawhole{0}
        \node[align=center] at (6, 1) {$\hat{S}_j$};
    \end{scope}
    \begin{scope}[shift={(1.5, -4)}]
        \fill[white] (-1.25, 0) ellipse (0.2 and 1);
        \draw[thick] (-1.25, 1) -- (1.25, 1);
        \draw[thick] (-1.25, -1) -- (1.25, -1);
        \draw[very thick, boundcolor,dashed] (-1.25, 0) ellipse (0.2 and 1);
        \draw[very thick, boundcolor] (-1.25, -1) arc (-90:90:0.2 and 1);
        \draw[very thick, boundcolor, dashed] (1.25, 1) arc (90:270:0.2 and 1);
        \draw[very thick, boundcolor] (1.25, -1) arc (-90:90:0.2 and 1);
        \drawhole{0}
    \end{scope}
        \begin{scope}[shift={(4, -4)}]
        \draw[thick] (-1.25, 1) -- (0, 1);
        \draw[thick] (-1.25, -1) -- (0, -1);
        \draw[thick] (0, 1) arc (90:-90:1); 
        \drawhole{0}
        \end{scope}

        \begin{scope}[shift={(-5.25, -7)}]
        \fill[white] (-1.25, 0) ellipse (0.2 and 1);
        \draw[thick] (-1.25, 1) -- (0, 1);
        \draw[thick] (-1.25, -1) -- (0, -1);
        \draw[thick] (0, 1) arc (90:-90:1); 
        \draw[very thick, boundcolor] (-1.25, 0) ellipse (0.2 and 1);
        \drawhole{0}
        \node[align=center] at (-2, 0) {$S_g$};
        \draw[-{Stealth[scale=1]}, line width=1.5pt, darkgray] 
        (2,0) -- (3, 0);
    \end{scope}
            \begin{scope}[shift={(-1, -7)}]
        \draw[thick] (0, 1) -- (1.25, 1);
        \draw[thick] (0, -1) -- (1.25, -1);
        \draw[thick] (0, 1) arc (90:270:1); 
        \draw[very thick, boundcolor, dashed] (1.25, 1) arc (90:270:0.2 and 1);
        \draw[very thick, boundcolor] (1.25, -1) arc (-90:90:0.2 and 1);
        \drawhole{0}
        \node[align=center] at (3.5, 1) {$\hat{S}_g$};
    \end{scope}
        \begin{scope}[shift={(1.5, -7)}]
        \draw[thick] (-1.25, 1) -- (0, 1);
        \draw[thick] (-1.25, -1) -- (0, -1);
        \draw[thick] (0, 1) arc (90:-90:1); 
        \drawhole{0}
        \end{scope}
    
\end{tikzpicture}
\caption{Capping all boundary components of the genus 1 surfaces $S_j$ results in several surfaces of genus 2 and 3.}
\label{f:cap}
\end{figure}

Each $S_j$ inherits a homomorphism
\[ f_j: \pi_1(S_j) \to G.\]
For $j=1,\dots,g-1$, let $\hat{S}_j$ be the surface formed by capping off each boundary component of $S_j$ by a \emph{genus 1 surface}; see Figure \ref{f:cap}.
Theorem \ref{thm:Ore} implies that $f_j$ extends to a homomorphism
\[ \hat{f}_j: \pi_1(\hat{S}_j) \to G.\]
By construction, there exists a 3-dimensional cobordism from $f: S \to BG$ to
\[ \bigsqcup_{j=1}^{g-1} \hat{f}_j: \hat{S}_j \to BG. \]
For $j=2,\dots,g-2$, each $\hat{S}_j$ has genus 3.
For $j=1$ and $g-1$, we can stabilize each of the genus 2 surfaces $\hat{S}_j$ with an additional 1-handle that maps trivially to $BG$.
This shows that $[f]$ is a sum of genus 3 classes, as needed.
\end{proof}

From our perspective, the reason the Ore conjecture only gets us down to genus 3 generators for $M(G)$ is that it is saying something about \emph{boundaries} of surfaces in $BG$ rather than \emph{closed} surfaces.
Indeed, Theorem \ref{thm:Ore} tells us exactly that boundaries of genus 1 surfaces are sufficient to exhibit every 1-cycle in $BG$ as a boundary.
But of course what we need to show in order to prove Theorem \ref{thm:K} is that \emph{closed genus 1 surfaces} are sufficient to generate \emph{every 2-cycle}.
To reformulate this kind of property as a group-theoretic statement about commutators, one place to look is in a Schur cover of $G$.

Recall that a group extension
\[ 1 \to K \to C \stackrel{\pi}{\to} G \to 1 \]
is called \emph{stem} if it is both central ($K \le Z(C)$) and has kernel contained in the commutator subgroup ($K \le [C,C]$).
If $G$ is a finite group, then a \emph{Schur cover} is a stem extension of $G$ of maximal order (one always exists); we call a group $C$ in such an extension a \emph{Schur covering group} of $G$.
For any Schur cover of $G$, it turns out that $K = \ker \pi$ is naturally isomorphic to the Schur multiplier $M(G)$.
For perfect $G$, there is a unique isomorphism class of Schur covering groups $C$.
More generally, any two Schur covering groups of a finite group $G$ are isoclinic.

The next lemma shows in what sense a Schur cover is a natural place to test for whether or not a group has a non-trivial Bogomolov multiplier.

\begin{lem}
\label{lem:cover}
A finite group $G$ has trivial Bogomolov  $B(G)=0$ if and only if it has a Schur cover $\pi: C \to G$ whose kernel $K = \ker \pi = M(G)$ is generated by commutators of $C$.  Moreover, if this is true of any Schur cover of $G$, then it is true for all of them.
\end{lem}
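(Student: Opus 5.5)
The plan is to identify, for a Schur cover $\pi: C \to G$, the subgroup $M_0(G) \le M(G)$ with the subgroup of $K = \ker \pi$ generated by commutators of $C$ that lie in $K$. The natural map here is the commutator map of the central extension. Because $K$ is central, the commutator $[\tilde x, \tilde y]$ of lifts of $x,y \in G$ does not depend on which lifts are chosen. This gives a well-defined homomorphism
\[ G \wedge G \to [C,C], \qquad x \wedge y \mapsto [\tilde x, \tilde y]. \]
We then invoke the standard fact, for a stem extension of maximal order, that this map is an isomorphism carrying $M(G) = \ker(G\wedge G \to [G,G])$ onto $K \cap [C,C] = K$. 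This is the Miller/Moravec description cited in the introduction: $C$ is a Schur cover precisely when $[C,C] \cong G \wedge G$ compatibly with the maps to $[G,G]$. Under this isomorphism, the generator $x \wedge y$ of $M_0(G)$, which requires $[x,y]=1$, goes to $[\tilde x, \tilde y]$, and this is a commutator of $C$ lying in $K$.

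Conversely, let $[a,b] \in K$ be any commutator of $C$. Then $[\pi a, \pi b] = 1$, so $[a,b]$ is the image of $\pi a \wedge \pi b \in M_0(G)$. Hence the isomorphism $M(G) \cong K$ identifies $M_0(G)$ with the subgroup $\langle [a,b] : a,b\in C,\ [a,b]\in K\rangle$. Therefore $B(G) = M(G)/M_0(G) = 0$ if and only if $K$ is generated by commutators of $C$. Here "commutators of $C$" has to be read as commutators of $C$ that lie in $K$. Since we want $K$ generated by them, only those lying in $K$ can matter, and I would make this reading explicit in the proof.

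For the "moreover" clause, note that the argument above works for an arbitrary Schur cover. So the condition is equivalent, for every Schur cover, to the intrinsic condition $M_0(G) = M(G)$, and independence from the choice of cover follows. As a topological alternative, one can use isoclinism: it gives isomorphisms of $C/Z(C)$ and of $[C,C]$ compatible with the commutator maps. These carry commutators to commutators and, since $K\le Z(C)$, match up the copies of $K = M(G)$.

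The main obstacle is justifying the isomorphism $G\wedge G \cong [C,C]$ that restricts to $M(G)\cong K$ for a Schur cover. The route I would try first is as follows. The map is surjective, since $[C,C]$ is generated by the $[\tilde x,\tilde y]$. It carries $M(G)$ into $K$. Both $M(G)$ and $K$ are finite of the same order by the naturality statement $K \cong M(G)$ recalled above, and together with compatibility over $[G,G]$ this forces injectivity. I would also check that the isomorphism $K\cong M(G)$ used in the paper agrees with this commutator-induced map, via the Hopf formula. If that check becomes delicate, it suffices to observe that any isomorphism arising from the commutator pairing sends $M_0(G)$ onto the subgroup generated by commutators in $K$, which is all the lemma needs.
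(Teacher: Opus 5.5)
Your proof is correct and follows the paper's argument. Both rest on the same key point: a generator $x\wedge y$ of $M_0(G)$ (a toral class) corresponds to the commutator $[\tilde x,\tilde y]$ of lifts, which lies in $K$, and every commutator of $C$ lying in $K$ arises this way for any Schur cover. You phrase this in Miller's exterior-square model and supply a justification of $G\wedge G\cong[C,C]$ (surjectivity together with $|M(G)|=|K|$), whereas the paper works with toral classes $f_*[S^1\times S^1]$ and only asserts that $[\tilde a,\tilde b]=[f]$.
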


Two remarks are in order before we prove the lemma.

First, the lemma is presumably known to \K: see \cite[p.~214]{K}, where he more-or-less states it (but seemingly adds an unneeded assumption of quasi-simplicity) .
The contrapositive of our lemma is essentially the Pontryagin dual of \cite[Lem.~2.4]{BMP} (which \K \ also cites).

Second, this lemma gives some hint of how subtle it is to find groups with non-trivial Bogomolov multiplier.  The kernel $K$ of a Schur cover $\pi: C \to G$ is, by definition, always contained in the intersection of the center $Z(C)$ and the commutator subgroup $[C,C]$.  While it is perhaps natural to expect this intersection to be generated by commutators (since the commutator subgroup itself is), this is too naive and there is no reason it must be so.  In fact, counterexamples exist even for a few quasi-simple $G$, \emph{cf.}\ \cite{K}.

\begin{proof}
The first sentence of the lemma follows from the definition of $B(G)$ (\emph{i.e.}, the quotient of $M(G)$ by the subgroup generated by all toral classes) together with the following key point: toral classes of $M(G)$ are precisely the same thing as elements of $K$ that are commutators in $C$.
Let's sketch the proof of this latter fact, since it is the heart of the matter.

Let $f: S^1 \times S^1 \to BG$ represent the toral class $[f] = f_*[S^1\times S^1] \in M(G)=H_2(BG;\mathbb{Z})$.
As in the proof of Proposition \ref{prop:genus3}, the homotopy class of this map (hence, the homology class) is entirely specified by the monodromy homomorphism
\[ f_\#: \pi_1(S^1 \times S^1) \to \pi_1(BG)=G,\]
and, conversely, any such homomorphism determines a corresponding homology class.
Of course $\pi_1(S^1 \times S^1) \cong \ZZ \times \ZZ$ is the free abelian group of rank 2, so $f_\#$ is determined precisely by a pair of elements $a,b \in G$ that commute.
Let $\tilde{a},\tilde{b} \in C$ be any choice of lifts of $a,b$ to the cover.
Then (as can be argued either via the algebraic or topological definitions of $M(G)=H_2(G,\ZZ)=H_2(BG, \ZZ)$, whichever one finds more convenient)
\[ [\tilde{a},\tilde{b}] = [f] \in K = M(G).\]
Reversing this argument, we see that any commutator in $K$ arises in this form.

Moreover, notice that in either direction of this argument, it was immaterial which Schur cover we chose.
This observation implies the second sentence of the lemma.
\end{proof}

Lemma \ref{lem:cover} tells us that improving the ``genus 3" in Proposition \ref{prop:genus3} to ``genus 1" inevitably means proving the following: every Schur cover of a finite simple group has a kernel generated by commutators.
Now Schur covers of finite simple groups are examples of a slightly more general class of groups called \emph{quasisimple groups}---that is, perfect central extensions of a simple groups.
So, to prove Theorem \ref{thm:K}, one is led to wonder to what extent Ore's conjecture might hold in all quasisimple groups.
Conveniently, this is completely understood, thanks to the follow-up work \cite{LOST2} by the same team who proved Theorem \ref{thm:Ore}.
We state the subset of their results just for Schur covers, in which case $Z(C) = K = \ker \pi = M(G)$.

\begin{thm}[Extension of Ore's conjecture to Schur covers of simple groups, proved in \cite{Blau,LOST2}]
\label{thm:qs}
If $C$ is a Schur cover of a finite simple group $G$, then, apart from the finite number of exceptions listed in Table \ref{t:ore}, every element $x \in C$ is a commutator. \qed
\end{thm}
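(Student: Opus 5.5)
The proof I have in mind runs through the classification of finite simple groups, combined with a character-theoretic criterion for being a commutator. The criterion is Frobenius's formula: for a finite group $C$ and $x \in C$, the number of pairs $(a,b) \in C\times C$ with $[a,b]=x$ is
\[ |C| \sum_{\chi \in \operatorname{Irr}(C)} \frac{\chi(x)}{\chi(1)}. \]
So $x$ is a commutator if and only if this character sum is nonzero. A second, more structural form of the same fact is also useful. Since $[a,b] = a\cdot(b a^{-1} b^{-1})$, the element $x$ is a commutator as soon as $x \in \mathcal{K}\mathcal{K}^{-1}$ for some conjugacy class $\mathcal{K}$ of $C$. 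I would use whichever form is convenient, family by family, as $C$ ranges over the Schur covers of the simple groups $G$ (recall $Z(C)=M(G)$ here).

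The first step disposes of the finite cases. For the sporadic groups, for the Schur covers with exceptional multiplier (such as the covers of $A_6$, $A_7$, $\mathrm{PSL}_3(4)$, ${}^2E_6(2)$, \dots), and for the finitely many small-rank, small-field groups of Lie type, the character tables of $C$ are known, for instance from the ATLAS and GAP. For each class of $C$ one evaluates the Frobenius sum directly. The classes where the sum vanishes are then collected into Table \ref{t:ore}. This is essentially Blau's contribution for central elements, extended to all elements.

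The second step treats the covers of the alternating groups $A_n$, namely the double covers $2.A_n$. I would try to reduce to an explicit combinatorial construction. Take a permutation $\sigma \in A_n$ that is a commutator $[\alpha,\beta]$, as in the proof of Ore for $A_n$, which uses products of two $n$-cycles or $(n-1)$-cycles. Lift it to the double cover, and check that both lifts of $\sigma$ arise. This amounts to controlling the sign $[\tilde\alpha,\tilde\beta] = \pm\tilde\sigma$. That sign can be computed from the standard spin-cover rules, and it can be switched by modifying $\alpha,\beta$ on a few extra points, or by an induction on $n$ that splits cycle types into blocks.

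The third step, the bulk of the work and the main obstacle, is the groups of Lie type in generic position. Here the kernel $K$ is a quotient of the center of the simply connected group, so $C$ is a quotient of $\mathbf{G}^F$ with $\mathbf{G}$ simply connected. I would aim to prove that every element of $\mathbf{G}^F$ itself is a commutator, which is stronger, and then pass to the quotient. For semisimple elements I would invoke Gow's theorem: in a finite group of Lie type, every semisimple element is a product of two elements from any regular semisimple class $\mathcal{K}$. Choosing $\mathcal{K}$ real, so that $\mathcal{K}=\mathcal{K}^{-1}$, gives $x \in \mathcal{K}\mathcal{K}^{-1}$. Non-semisimple elements $x=su$ (Jordan decomposition) are where I expect the difficulty. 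For large rank I would induct through Levi subgroups $L=C_{\mathbf{G}}(s)$ and their derived subgroups, using that commutators in a subgroup are commutators in $\mathbf{G}^F$. For bounded rank and large $q$, I would instead apply the Frobenius criterion with Deligne--Lusztig character bounds of the form $|\chi(x)|/\chi(1) \le c\,q^{-1/2}$ for nonlinear $\chi$, together with Witten zeta-function estimates $\sum_\chi \chi(1)^{-s}\to 1$. These force the Frobenius sum to be close to $1$, hence nonzero. The remaining gap between the two regimes, small $q$ with moderate rank, would be closed by a finite computer check, which feeds the remaining exceptions into Table \ref{t:ore}.
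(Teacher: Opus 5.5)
The paper does not prove this statement. It is quoted from \cite{Blau} for central elements, which are all that the proof of Theorem~\ref{thm:K} uses via Lemma~\ref{lem:cover}, and from \cite{LOST2} for non-central elements, which builds on \cite{LOST1}. Your outline follows the general shape of that literature (Frobenius criterion, Gow, Deligne--Lusztig, computer checks), but the steps that carry the weight would not work as written.

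The most serious gap is the central elements of the infinite Lie-type families, which is exactly the part the paper needs. You apply Gow's theorem inside $\mathbf{G}^F$ to all semisimple elements, but no statement of that form can hold there. For $z$ central, $z\in\mathcal{K}\mathcal{K}^{-1}$ if and only if $z\mathcal{K}=\mathcal{K}$. In $\mathrm{SL}_2(q)$ with $q\ge 7$ odd, choose $a\in\mathbb{F}_q^\times$ with $a^4\neq 1$. The class $\mathcal{K}$ of $\operatorname{diag}(a,a^{-1})$ is regular semisimple and real, but $-\mathcal{K}=\mathcal{K}$ would force $a^{-1}=-a$, i.e.\ $a^2=-1$. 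So $-I\notin\mathcal{K}^2$. (Even at $x=1$, the phrase ``any regular semisimple class'' fails for non-real classes.) Character-ratio bounds cannot rescue this either: $|\chi(z)|=\chi(1)$ for central $z$, so the Frobenius sum at $z$ is a sum of roots of unity with no decay. The missing idea is Blau's fixed-point statement. For each $z\in Z(C)$ you must exhibit a class $\mathcal{K}$ of $C$ with $z\mathcal{K}=\mathcal{K}$. This has to be done for every infinite family (scalars in $\mathrm{SL}_n(q)$ and $\mathrm{SU}_n(q)$, $-I$ in $\mathrm{Sp}_{2n}(q)$, centers of spin groups), not only for the finitely many groups in your Step 1.

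For non-central elements, the induction through $L=C_{\mathbf{G}}(s)^F$ never starts. Here $s$ is central in $L$ and usually not in $[L,L]$, so $x=su$ is not a commutator in $L$ or in its derived subgroup. For instance, in $\mathrm{SL}_3(5)$ take $s=\operatorname{diag}(4,2,2)$ and $u$ a nontrivial unipotent element in the lower $2\times 2$ block. Then $L\cong \GL_2(5)$ has commutator subgroup $1\times \mathrm{SL}_2(5)$, which does not contain $su$. (Also, $C_{\mathbf{G}}(s)$ is in general not a Levi subgroup.)

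The mechanism in \cite{LOST1} is different. \emph{Breakable} elements preserve a nondegenerate decomposition $V=V_1\perp V_2$ with components in smaller quasisimple groups of the same type (with determinant and spinor-norm bookkeeping); these are handled by induction. \emph{Unbreakable} elements, in every rank and for small $q$, are handled by showing $\sum_\chi |\chi(g)|^2\chi(x)/\chi(1)\neq 0$ for a well-chosen regular semisimple $g$. This uses $|\chi(x)|\le |C(x)|^{1/2}$, which is effective precisely because unbreakable elements have small centralizers. Without this, ``small $q$, moderate rank'' is not a finite check: small $q$ with unbounded rank is the core of the problem.

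Your large-$q$ estimate also does not add up. A bound $|\chi(x)|/\chi(1)\le c q^{-1/2}$ summed over the roughly $q^r$ irreducible characters gives an error of order $q^{r-1/2}$, not $o(1)$. The Witten zeta function only helps through exponent bounds $|\chi(x)|\le \chi(1)^{1-\epsilon}$ with $\epsilon$ beyond the abscissa $2/h$, and these fail for elements with large centralizers. The standard large-$q$ input is instead the Gauss-decomposition argument of Ellers--Gordeev. Finally, the sign-switching for $2.A_n$ in Step 2 is asserted rather than shown.
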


\begin{table}
\label{t:ore}
\centering
\begin{tabular}{ccc|c|c|c}
 & group $G$ & & Schur mult.~$M(G)$ & $|x|$ for & $|x|$ for \\
LOST & Wiki & Blau & (primary decomp.) & $x \in Z(C)$ & $x \notin Z(C)$\\ \hline
$A_6$ & $A_6$ & $A_6$ & $2 \times 3$ & 6 & 15, 24 \\
$A_7$ & $A_7$ & $A_7$ & $2 \times 3$ & 6 & 15 \\
$L_3(4)$ & $A_2(4)$ & $PSL(3,4)$ &  $3 \times 4 \times 4$ & 4, 6, 12 & 12, 84\\
$U_4(3)$ & ${}^2A_3(3^2)$ & $PSU(4,3^2)$ & $3 \times 3 \times 4$ & 6, 12 & 6, 12\\
$M_{22}$ & $M_{22}$ & $M_{22}$ & $3 \times 4$ & 6, 12 & -\\
$Fi_{22}$ & $Fi_{22}$ & $Fi_{22}$ & $2 \times 3$ & 6 & -\\
${}^2E_6(2)$ & ${}^2E_6(2^2)$ & ${}^2E_6(2^2)$ & $2 \times 2 \times 3$ & 6 & -
\end{tabular}
\vspace{.2cm}
\caption{The orders $|x|$ for all exceptions $x$ to the rule that elements of Schur covers of simple groups are equal to commutators.  
For convenience, we list the group names according to each of the conventions of LOST \cite{LOST2}, Wikipedia \cite{Wiki}, and Blau \cite{Blau}.
Note that $Z(C)$ and $M(G)$ are naturally isomorphic for $C$ a Schur cover of a simple group $G$.}
\end{table}

We are now ready to complete the proof of Theorem \ref{thm:K}.
The astute reader will notice that we do not need the results of the last column of Table \ref{t:ore}, which are the new contribution of \cite{LOST2}; the rest of the table was calculated by Blau much earlier \cite{Blau}.

\begin{proof}[Proof of Theorem \ref{thm:K}]
Theorem \ref{thm:qs} combines with Lemma \ref{lem:cover} to imply immediately that there is only a \emph{finite list} of finite simple groups whose Bogomolov multipliers we must argue vanish.

One can start by being a complete brute, and just try to ask, say, GAP \cite{GAP} to compute $B(G)$ for each of the seven groups $G$ in Table \ref{t:ore}.
In fact, the GAP package HAP \cite{HAP} has a built-in function BogomolovMultiplier, which can quickly confirm (on a modern laptop) vanishing for the four groups 
$A_6$, $A_7$, $L_3(4)$ and $M_{22}$, the largest of which is order
\[ |M_{22}| = 443,520. \]
The remaining three groups have orders
\[
\begin{aligned}
|Fi_{22}|&= 64,561,751,654,400 \\
|U_4(3)|&=101,798,586,432,000 \\
|{}^2E_6(2)|&= 76,532,479,683,774,853,939,200,
\end{aligned}
\]
and there is little chance for off-the-shelf, direct methods to work without unrealistic time or memory resources.
Alas, the brutish route only gets us so far.

To finish the calculations, one simply needs to look more carefully at the second and third columns of Table \ref{t:ore}.
For example, we see that the only elements $x \in H_2({}^2E_6(2^2), \ZZ)$ that are not toral have order 6.
But $H_2({}^2E_6(2^2), \ZZ) \cong \ZZ/2\ZZ \times \ZZ/2\ZZ \times \ZZ/3\ZZ$ is generated by elements of order 2 and 3.
Since these are all toral classes, we conclude that $B({}^2E_6(2^2))=0$.
In fact, the same kind of reasoning shows that $B(G) = 0$ for all seven of the groups in Table \ref{t:ore} \emph{except} for $L_3(4)$, but we already eliminated that one via direct calculation.
\end{proof}

It has been noted before that finite simple groups have---perhaps surprisingly---``small" Schur multipliers, in the sense that $|M(G)|$ is often quite small compared to $|G|$.
See MathOverflow for a nice discussion of this point, especially Jesper Grodal's answer to a question of Terence Tao \cite{MO}.
The fact that $B(G)=0$ adds further to the ``smallness": $M(G)=\Omega_2^{SO}(BG)$ is ``small" in the sense that genus 1 classes are sufficient to generate all of it.

\section{Finite Coxeter groups}
A \emph{pseudoreflection} on a finite-dimensional complex vector space $V$ is an invertible linear transformation $g: V \longrightarrow V$ of finite order such that the subspace of fixed points $V^g = \{ v \in V : gv = v \}$ has codimension one. A finite subgroup of $\GL(V)$ is called a \emph{complex reflection group} if it is generated by pseudoreflections. An important result at the intersection of combinatorics and geometry is the Chevalley--Shephard--Todd theorem.

\begin{thm}[Shephard--Todd \cite{ST54}, Chevalley \cite{Ch55}] Let $V$ be a finite dimensional vector space over the complex numbers $\CC$ and let $G$ be a finite subgroup of the general linear group $\operatorname{GL}(V)$. Then the following are equivalent:
\begin{enumerate}
    \item The group $G$ is a complex reflection group.
    \item The algebra of $G$-invariant polynomials $\CC[V]^G$ is a polynomial algebra. \qed
\end{enumerate}
\end{thm}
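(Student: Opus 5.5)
We would prove the two implications separately, both working with the graded algebra $R=\CC[V]=\operatorname{Sym}(V^*)$, the invariant subalgebra $R^G$, and the Reynolds operator $\rho(f)=\frac{1}{|G|}\sum_{g\in G} g\cdot f$. This is the classical argument; nothing from the earlier sections of the paper is needed.

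For (1)$\Rightarrow$(2) we would follow Chevalley. Let $I\subset R$ be the ideal generated by the homogeneous invariants of positive degree. Choose homogeneous invariants $f_1,\dots,f_r$ whose images form a minimal generating set of $I$; by Hilbert's argument with $\rho$, they generate $R^G$ as an algebra. It then suffices to show they are algebraically independent. The key lemma is the following: if $h_1,\dots,h_m\in R^G$ with $h_1\notin (h_2,\dots,h_m)R^G$, and $\sum p_i h_i=0$ for homogeneous $p_i\in R$, then $p_1\in I$.

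We would prove the lemma by induction on $\deg p_1$, using divided differences. For a pseudoreflection $s$ with reflecting hyperplane $\ell_s=0$, the polynomial $p-s\cdot p$ is divisible by $\ell_s$. The operator $\Delta_s(p)=(p-s\cdot p)/\ell_s$ lowers degree and is $R^G$-linear. Applying it to the relation and using induction gives $p_1-s\cdot p_1\in I$ for every generator $s$, hence for every $g\in G$. Averaging then gives $p_1\equiv\rho(p_1)\in I$.

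Algebraic independence follows by the standard trick. Take a minimal-degree polynomial relation $P(f_1,\dots,f_r)=0$ and differentiate it with respect to each coordinate $x_k$. Apply the lemma to the invariants $P_i=\partial P/\partial y_i(f)$, reordered so that $P_1,\dots,P_m$ generate the ideal they span. Combining with Euler's identity, this produces a nontrivial relation expressing some $f_i$ in terms of the others modulo $I^2$, which contradicts minimality of the generating set.

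For (2)$\Rightarrow$(1), suppose $R^G=\CC[f_1,\dots,f_n]$ with $\deg f_i=d_i$ (there are exactly $n=\dim V$ generators, since $R$ is finite over $R^G$). Comparing the Hilbert series $\prod_i (1-t^{d_i})^{-1}$ with Molien's formula $\frac{1}{|G|}\sum_g \det(1-tg)^{-1}$ and expanding at $t=1$ gives two identities:
\[ \prod_i d_i = |G|, \qquad \sum_i (d_i-1) = N_G, \]
where $N_G$ is the number of pseudoreflections in $G$. Let $H\trianglelefteq G$ be the subgroup generated by all pseudoreflections of $G$. Then $H$ is a reflection group with $N_H=N_G$, so by the first half $R^H=\CC[h_1,\dots,h_n]$ with degrees $e_i$ satisfying $\sum(e_i-1)=N_G$ and $\prod e_i=|H|$.

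Since $R^G\subseteq R^H$, each $f_j$ is a polynomial in the $h_i$. Ordering both degree lists increasingly, a Jacobian argument shows $d_i\ge e_i$ for all $i$: the Jacobian of the $f$'s with respect to the $h$'s is nonzero, so some permutation term of it is nonzero. The two equal sums then force $d_i=e_i$ for all $i$, hence $|G|=\prod d_i=\prod e_i=|H|$ and $G=H$.

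The main obstacle is the key lemma in (1)$\Rightarrow$(2), together with the precise bookkeeping turning it into algebraic independence; this is where the pseudoreflection hypothesis enters, through divisibility by $\ell_s$. A secondary subtlety is the Laurent expansion of Molien's series at $t=1$. Its pole of order $n$ comes only from $g=1$, and the order-$(n-1)$ term comes exactly from the pseudoreflections; pairing each $s$ with $s^{-1}$ gives the count $N_G$.
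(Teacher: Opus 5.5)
The paper gives no proof of this theorem. It is quoted from Shephard--Todd and Chevalley, and is used only through (1)$\Rightarrow$(2): its consequence that $\CC(V)^G$ (the fraction field of $\CC[V]^G$) is purely transcendental makes $V/G$ rational, and then $B_0(G)\cong Br_v(\CC(V/G))$ gives $B(G)=0$. So there is no argument in the paper to compare yours against.

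What you outline is the standard classification-free proof, and it is correct. For (1)$\Rightarrow$(2) you use Chevalley's lemma via divided differences, followed by the Euler-identity trick. For (2)$\Rightarrow$(1) you use the Molien identities $\prod d_i=|G|$ and $\sum(d_i-1)=N_G$, compared with the reflection subgroup $H$ by a Jacobian degree argument. Your route makes the result self-contained. Note, though, that the converse half plays no role in the paper. The forward half is exactly the birational input that the paper's later combinatorial argument for Coxeter groups is designed to avoid.

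A few details should be made explicit:

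\begin{enumerate}
\item In the key lemma, the base case $\deg p_1=0$ is where the hypothesis $h_1\notin(h_2,\dots,h_m)R^G$ is used. A nonzero constant $p_1$ would, after applying $\rho$ to the relation, put $h_1$ in that ideal. Your step $\rho(p_1)\in I$ is valid only when $\deg p_1>0$.
\item In the independence step, take $P$ weighted-homogeneous for the weights $\deg f_i$. Then the $P_i$ are homogeneous, and by minimality of $\deg P$ they are not all zero.
\item Take $P_1,\dots,P_m$ to be a \emph{minimal} generating subset of the ideal of $R^G$ generated by all the $P_i$; as written, ``generate the ideal they span'' is vacuous. Choose homogeneous coefficients $g_{ij}$ expressing $P_{m+1},\dots,P_r$ in terms of $P_1,\dots,P_m$. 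These are exactly what the lemma needs: its homogeneity hypothesis and $P_1\notin(P_2,\dots,P_m)R^G$.
\item Euler's identity places $d_1f_1+\sum_{i>m}d_i g_{i1}f_i$ in $R_+I$, not directly in $I^2$. That already contradicts minimality by graded Nakayama. One more application of $\rho$ puts it in $(R^G_+)^2\subseteq I^2$, as you state.
\end{enumerate}
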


In particular, $\CC(V)^G = \CC(V/G)$ is a field of rational functions.
In other words, $V/G$ is a rational algebraic variety.
Thus, given the role of $B(G)$ explained in the introduction, this proves

\begin{cor}
    If $G$ is a complex reflection group, then its Bogomolov multiplier $B(G)=0$ vanishes.
\end{cor}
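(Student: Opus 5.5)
The plan is to deduce the corollary formally from the Chevalley--Shephard--Todd theorem together with the identification, recalled in the introduction, of $B(G)$ with the unramified Brauer group. Let $G \le \GL(V)$ be a complex reflection group acting on the complex vector space $V$. The representation is faithful by definition, since $G$ is a subgroup of $\GL(V)$. By the implication (1)$\Rightarrow$(2) of the theorem, $\CC[V]^G = \CC[f_1,\dots,f_n]$ is a polynomial algebra with $n=\dim V$. The first step is to pass from invariant polynomials to invariant rational functions. Since $G$ is finite, every element of $\CC(V)^G$ can be written as $p/q$ with $p,q \in \CC[V]^G$: multiply numerator and denominator by $\prod_{g\ne 1} g\cdot q$. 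Hence $\CC(V)^G = \operatorname{Frac}(\CC[V]^G) = \CC(f_1,\dots,f_n)$ is purely transcendental over $\CC$, and the quotient $V/G$ is rational.

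The second step uses birational invariance. Saltman's unramified Brauer group $Br_v$ is a birational invariant, and it vanishes for $\CC(x_1,\dots,x_n)$, because $Br_v$ of projective space is $Br(\mathbb{P}^n)=0$ (equivalently, by purity and the fact that $\mathbb{P}^n$ is simply connected with $H^3=0$). Therefore $Br_v(\CC(V/G))=0$. Bogomolov's theorem \cite{Bo87} identifies this group with $B_0(G)$, so $B_0(G)=0$.

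The last step converts this to our preferred $B(G)=\tilde B_0(G)$. By Moravec \cite{Mo12}, $B_0(G)$ is the Pontryagin dual of $\tilde B_0(G)$. A finite abelian group with trivial dual is itself trivial, so $B(G)=0$. The use of Pontryagin duality here needs no naturality, only the equality of orders.

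The only step that is more than bookkeeping is the vanishing of $Br_v$ for a purely transcendental extension. I would simply cite it as standard, in Saltman \cite{Sal84}, since the paper takes that role of $B(G)$ as given. In this sense the corollary is ``obvious'' once the hard direction (1)$\Rightarrow$(2) of Chevalley--Shephard--Todd is granted, and that hard direction is exactly what the topological argument later in the section aims to avoid.
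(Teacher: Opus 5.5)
Your proposal is correct and follows the paper's own argument. Chevalley--Shephard--Todd makes $\CC(V)^G=\CC(V/G)$ purely transcendental, hence $V/G$ rational, and the birational interpretation of the Bogomolov multiplier recalled in the introduction then forces $B(G)=0$; you simply make explicit the steps the paper leaves implicit (that $\CC(V)^G$ is the fraction field of $\CC[V]^G$, that $Br_v$ vanishes for rational function fields, and the passage from $B_0(G)$ to $B(G)$ via Moravec's duality).
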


In the rest of this section, we will give a new, elementary proof of this fact in the special case of orthogonal reflections on a Euclidean space---\emph{i.e.,}\ finite Coxeter groups.  Our aim is to show that for such groups, their Schur multiplier has a generating set where all classes are toral. 

The Bogomolov multiplier satisfies pertinent properties for the product of two groups: $B(G_1\times G_2)\cong B(G_1)\times B(G_2)$.
A finite Coxeter group $G$ is characterized as admitting a presentation of the form
\ben
\langle r_1,\cdots, r_n | (r_ir_j)^{m_{ij}}=1,m_{ii}=1,m_{ij}=m_{ji}\in \{2,3,\cdots\},i\neq j,1\leq i,j\leq n\rangle\,.
\een
There is an associated Coxeter diagram $\Pi(G)$ which contains $n$ points corresponding to the generators $r_i$ ($1\leq i\leq n$) in a one-to-one way, and if $m_{ij}\neq 2$ ($i\neq j$), the two points corresponding to $r_i$ and $r_j$, respectively, are connected by segments together with the number $m_{ij}$. A Coxeter group is irreducible if and only if the diagram $\Pi(G)$ is connected. Assume $\Pi(G)$ is a disjoint union of two subdiagrams $\Pi_1$ and $\Pi_2$. If $G_1$ and $G_2$ are the associated Coxeter groups for the diagrams $\Pi_1$ and $\Pi_2$, then $G=G_1\times G_2$. A fundamental result in this area is the following.

\begin{thm}[\cite{Cox2,Wit41}] The following list contains all the irreducible finite Coxeter groups:\begin{tabbing}
\hspace{3cm}             \= \hspace{4cm}  \\
$A_n\,(n\geq 1):$          \> $\xymatrix{ \sta{r_1}{\ci}\ar@{-}[r]_3  & \sta{r_2}{\ci} \ar@{.}[r]& \stackrel{r_{n-1}}{\circ}\ar@{-}[r]_3 &\stackrel{r_{n}}{\circ}}$ \\
$B_n\,(n\geq 2):$           \>$\xymatrix{\sta{r_1}{\ci}\ar@{-}[r]_3  & \sta{r_2}{\ci} \ar@{.}[r]& \sta{r_{n-2}}{\ci}\ar@{-}[r]_3 &\sta{r_{n-1}}{\ci}\ar@{-}[r]_4&\sta{r_{n}}{\ci}}$\\
$D_n\,(n\geq 4):$           \>$\xymatrix{\sta{r_1}{\ci}\ar@{-}[r]_3  & \sta{r_2}{\ci} \ar@{.}[r]& \sta{r_{n-3}}{\ci}\ar@{-}[r]_3 &\sta{r_{n-2}}{\ci}\ar@{-}[r]_3\ar@{-}[d]^3&\sta{r_{n}}{\ci}&\\
&&&\underset{r_{n-1}}{\ci}}$\\
$E\,(n=6,7,8):$             \>$\xymatrix{\sta{r_1}{\ci}\ar@{-}[r]_3  & \sta{r_2}{\ci} \ar@{.}[r]& \sta{r_{n-4}}{\ci}\ar@{-}[r]_3 &\sta{r_{n-3}}{\ci}\ar@{-}[r]_3\ar@{-}[d]^3&\sta{r_{n-1}}{\ci}\ar@{-}[r]_3&\sta{r_{n}}{\ci}
\\
&&&\underset{r_{n-2}}{\ci}}$\\
$F_4:$                      \>$\xymatrix{\sta{r_1}{\ci}\ar@{-}[r]_3  & \sta{r_2}{\ci} \ar@{-}[r]_4& \stackrel{r_{3}}{\circ}\ar@{-}[r]_3 &\stackrel{r_{4}}{\circ}}$\\
$G_2^{(n)}(n\geq 5):$\>$\xymatrix{\sta{r_1}{\ci}\ar@{-}[r]_n  & \sta{r_2}{\ci}}$\\
$H_3:$\>$\xymatrix{\sta{r_1}{\ci}\ar@{-}[r]_3  & \sta{r_2}{\ci}\ar@{-}[r]_5&\sta{r_3}{\ci}}$\\
$H_4:$\>$\xymatrix{\sta{r_1}{\ci}\ar@{-}[r]_3  & \sta{r_2}{\ci}\ar@{-}[r]_3&\sta{r_3}{\ci}\ar@{-}[r]_5&\sta{r_4}{\ci}}$\\
\end{tabbing}
\end{thm}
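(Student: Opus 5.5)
The statement is the classical Coxeter--Witt classification, and I would prove it by the standard route: translate finiteness of $G$ into positive definiteness of a quadratic form attached to the Coxeter diagram $\Pi(G)$, then classify the connected diagrams whose form is positive definite.

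\emph{Step 1: finiteness forces positive definiteness.} Let $V=\mathbb{R}^n$ with basis $e_1,\dots,e_n$, and define the symmetric bilinear form
\[
\langle e_i,e_j\rangle=-\cos(\pi/m_{ij}).
\]
Let $r_i$ act on $V$ by $\sigma_i(v)=v-2\langle e_i,v\rangle e_i$. The relations $(\sigma_i\sigma_j)^{m_{ij}}=1$ hold, because on the plane spanned by $e_i,e_j$ the product $\sigma_i\sigma_j$ is a rotation by $2\pi/m_{ij}$. This gives the geometric representation $G\to \GL(V)$.
\begin{itemize}
\item If $G$ is finite, averaging an arbitrary inner product over the image gives a $G$-invariant positive definite form.
\item When $\Pi(G)$ is connected, the representation is irreducible, or at least has no nonzero invariant vectors in the radical of $\langle\,,\rangle$. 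A Schur-type argument then shows that every invariant symmetric form is a scalar multiple of $\langle\,,\rangle$.
\end{itemize}
Hence $\langle\,,\rangle$ itself is positive definite. I expect this step to be the main technical obstacle. The clean way to handle it is to show first that the radical is $G$-stable and, by connectedness, must be trivial. That in turn needs faithfulness of the geometric representation, i.e.\ that $\sigma_i\sigma_j$ really has order $m_{ij}$, which is the two-dimensional computation above.

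\emph{Step 2: combinatorial elimination.} Every full subdiagram of a positive definite diagram is positive definite, since it is a principal minor. So it suffices to rule out a small list of minimal forbidden diagrams, each by exhibiting an explicit nonzero vector $v$ with $\langle v,v\rangle\le 0$. These are exactly the affine diagrams $\tilde A_n,\tilde B_n,\tilde C_n,\tilde D_n,\tilde E_{6,7,8},\tilde F_4,\tilde G_2$, together with the hyperbolic $3$-vertex diagrams containing labels such as $(4,5)$, $(5,5)$, and $(3,6)$. From these exclusions I would derive, in order:
\begin{enumerate}
\item $\Pi(G)$ has no cycles; the cycle $\tilde A_n$ has null vector $\sum e_i$.
\item Each vertex has $\sum_j\cos^2(\pi/m_{ij})<1$, so its degree is at most $3$, counted with weights.
\item There is at most one branch vertex and at most one edge with label $\geq 4$, and not both.
\item A label of $6$ or more occurs only in the two-vertex case $G_2^{(n)}$.
\item A label of $5$ occurs only in $H_3$ or $H_4$.
\item A label of $4$ occurs only in $B_n$ or $F_4$.
\item A branched simply laced tree with arm lengths $(p,q,r)$ must satisfy $1/p+1/q+1/r>1$, which gives $D_n$ and $E_{6,7,8}$.
\end{enumerate}

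\emph{Step 3: the converse.} Each listed diagram does give a finite group, which I would verify by exhibiting realizations:
\begin{itemize}
\item $A_n=S_{n+1}$;
\item $B_n$ is the signed permutation group;
\item $D_n$ is the group of even-signed permutations;
\item $G_2^{(n)}$ is the dihedral group of order $2n$;
\item $H_3,H_4$ are the symmetry groups of the icosahedron and the $600$-cell;
\item $F_4$ and $E_{6,7,8}$ are the Weyl groups of the corresponding finite root systems.
\end{itemize}
Alternatively, one can check directly that the Gram matrix has positive leading minors, since a positive definite invariant form makes the group discrete in a compact orthogonal group, hence finite. The reduction of reducible groups to products $G_1\times G_2$ noted earlier completes the statement.
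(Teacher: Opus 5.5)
The paper gives no proof of this theorem; it simply cites Coxeter and Witt. Your outline follows the standard textbook route, but Step 2 does not close as you have set it up.

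\textbf{The gap in Step 2.} Your forbidden list is effectively just the affine diagrams. For a test vector with nonnegative coordinates, raising a label only lowers $\langle v,v\rangle$. So the paths with labels $(4,5)$ and $(5,5)$ are already excluded by $\tilde{C}_2$, and the path $(3,6)$ is $\tilde{G}_2$ itself rather than a hyperbolic diagram. Affine diagrams together with any $3$-vertex obstructions cannot yield your item (5). Consider two paths:
\begin{itemize}
\item the four-vertex path with labels $3,5,3$;
\item the five-vertex path with labels $3,3,3,5$.
\end{itemize}
Every $3$-vertex subdiagram of either is of finite type ($H_3$, $A_3$, $A_1\times A_2$, $A_1\times G_2^{(5)}$, and so on), even after lowering labels. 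So no $3$-vertex obstruction of any kind can rule them out. Neither contains an affine subdiagram either: each has a single label $\ge 4$ and no branch point, which leaves only $\tilde{F}_4$, and that pattern does not fit. Yet their Gram determinants are
\[
\tfrac{9}{16}-\cos^2\tfrac{\pi}{5}=\tfrac{3-2\sqrt{5}}{16}<0
\qquad\text{and}\qquad
\tfrac{5}{16}-\tfrac12\cos^2\tfrac{\pi}{5}=\tfrac{2-\sqrt{5}}{16}<0 .
\]
These two minimal non-positive-definite, non-affine diagrams must be added to your list, with this determinant computation. Only then are label-$5$ diagrams cut down to $H_3$ and $H_4$. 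Item (5) also omits the rank-two case $G_2^{(5)}$.

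\textbf{Two smaller corrections.} In Step 1, the radical is not trivial ``by connectedness'': affine diagrams are connected and have a one-dimensional radical. What kills the radical is finiteness. The radical consists of $G$-fixed vectors, and by Maschke it has a $G$-stable complement. For a connected diagram, every proper $G$-stable subspace lies in the radical. If the radical were nonzero, the complement would be proper, hence $0$, forcing $\langle\,,\rangle=0$; this contradicts $\langle e_i,e_i\rangle=1$. Irreducibility and the self-adjoint form of Schur's lemma then finish the step. Faithfulness plays no role here, since the image of a finite group is finite; the rank-two computation only shows that $r_ir_j$ has order exactly $m_{ij}$.

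Faithfulness is genuinely needed in Step 3. Exhibiting $S_{n+1}$, signed permutations, and so on only gives surjections \emph{from} the Coxeter group onto those groups; it does not show the Coxeter group is finite. You need one of the following:
\begin{itemize}
\item the fact that these groups actually have the Coxeter presentation (a finite reflection group is presented by its simple reflections); or
\item Tits's faithfulness theorem together with discreteness of the image. Positive definiteness alone only places the image in a compact group and gives no discreteness.
\end{itemize}
Strictly speaking, ``the list contains all'' requires only Steps 1 and 2.
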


Ihara and Yokonuma calculated in \cite{IY65} the Schur multiplier for the irreducible finite Coxeter groups using the cohomological interpretation of the Schur multiplier. A good account of the proof of this result can be found in the book by Karpilovsky \cite{Kar87}. 
\begin{thm}[Ihara-Yokonuma \cite{IY65}]\label{IY} The Schur multiplier $M(G)$ of an irreducible Coxeter group has the following values:
\begin{itemize}
    \item[(i)] If $G$ is of type $A_n$, then $M(G)\cong \left\{   
    \begin{array}{ll}
        1 &  \textrm{if }n\leq 2\\
        \ZZ_2 & \textrm{if }n\geq 3\,. 
    \end{array}\right.$
    \item[(ii)] If $G$ is of type $B_n$, then $M(G)\cong 
    \left\{\begin{array}{ll}
        \ZZ_2 &  \textrm{if }n = 2\\
        \ZZ_2\times \ZZ_2 & \textrm{if }n= 3\\
        \ZZ_2\times \ZZ_2\times\ZZ_2 & \textrm{if }n\geq 4\,.
    \end{array}\right.$
    \item[(iii)] If $G$ is of type $D_n$, then $M(G)\cong \left\{
    \begin{array}{ll}
        \ZZ_2\times \ZZ_2\times \ZZ_2 &  \textrm{if }n=4\\
        \ZZ_2\times \ZZ_2 & \textrm{if }n\geq 5\,. 
    \end{array}\right.$
    \item[(iv)] If $G$ is of type $E_6$, $E_7$, or $E_8$, then $M(G)\cong \ZZ_2$.
    \item[(v)] If $G$ is of type $F_4$, then $M(G)\cong \ZZ_2\times \ZZ_2$.
    \item[(vi)] If $G$ is of type $G_2^{(n)}$, then $M(G)\cong \left\{   
    \begin{array}{ll}
        1 &  \textrm{if }n\textrm{ is odd}\\
        \ZZ_2 & \textrm{if }n\textrm{ is even}\,. 
    \end{array}\right.$
    \item[(vii)] If $G$ is of type $H_3$ or $H_4$, then $M(G)\cong \ZZ_2$.
\end{itemize}
\end{thm}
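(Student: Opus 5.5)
The plan is the classical cohomological one: compute $H^2(G,\CC^\times)\cong M(G)$, which is dual to it and non-naturally isomorphic since $M(G)$ is finite, by classifying the central extensions of $G$ directly from the Coxeter presentation. The computation splits into an upper bound, obtained by normalizing factor sets, and a lower bound, obtained by exhibiting enough central extensions; a case-by-case count then finishes.

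\emph{Upper bound.} Take a projective representation, or equivalently a central extension $1\to A\to \tilde G\to G\to 1$. Choose lifts $\tilde r_i$ of the generators. Then
\[ \tilde r_i^2=\varepsilon_i\in A, \qquad (\tilde r_i\tilde r_j)^{m_{ij}}=\varepsilon_{ij}\in A. \]
\begin{enumerate}
\item Working over $\CC^\times$, rescale each $\tilde r_i$ so that $\varepsilon_i=1$.
\item If $m_{ij}$ is odd, then $r_i$ and $r_j$ are conjugate by $w=r_ir_jr_i\cdots$ (with $m_{ij}-1$ factors). Rescaling $\tilde r_j$ by a common scalar on its conjugacy class then forces $\varepsilon_{ij}=1$.
\item If $m_{ij}$ is even, conjugation by $\tilde r_i$ inverts $\tilde r_i\tilde r_j$. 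Applied to the central element $(\tilde r_i\tilde r_j)^{m_{ij}}=\varepsilon_{ij}$, this gives $\varepsilon_{ij}=\varepsilon_{ij}^{-1}$, so $\varepsilon_{ij}=\pm1$.
\item Conjugating a relation along an odd-labelled edge identifies invariants: if $m_{jk}$ is odd and $r_i$ commutes appropriately, then $\varepsilon_{ij}=\varepsilon_{ik}$.
\end{enumerate}
The net result is that the cocycle class is determined by a sign attached to each orbit of even-labelled pairs $\{i,j\}$ under these identifications. This already gives $M(G)$ as an elementary abelian $2$-group whose rank is bounded by the number of such orbits. Counting orbits on each diagram in the list of irreducible finite Coxeter groups gives exactly the ranks in the statement:
\begin{itemize}
\item For $A_n$ with $n\ge3$, all commuting pairs form one orbit.
\item For $B_n$, the orbit types are: commuting pairs among $r_1,\dots,r_{n-1}$ (present only if $n\ge 4$), pairs $r_k,r_n$ with $k\le n-2$ (present only if $n\ge3$), and the $4$-edge.
\item For $D_n$, the triality symmetry at $n=4$ produces the extra orbit.
\item For $G_2^{(n)}$, the single pair is even-labelled exactly when $n$ is even.
\end{itemize}
The $E$, $F_4$ and $H$ cases are handled the same way.

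\emph{Lower bound.} Each surviving sign must actually occur, meaning the presentation with the chosen $\pm1$ values defines a group in which the central element $-1$ is nontrivial. I would realize the extensions concretely rather than appeal to a general word-problem argument:
\begin{itemize}
\item Schur's double covers of $S_{n+1}$, for type $A_n$.
\item Spin-type extensions from Clifford algebras. The reflection representation of any finite Coxeter group lifts to $\mathrm{Pin}(V)$, giving the sign pattern $\varepsilon_{ij}=-1$ for all even pairs.
\item Pullbacks along quotient maps. For $B_n$, use $B_n\to S_n$ and $B_n\to \ZZ_2$ composed with $\ZZ_2\times\ZZ_2$-type extensions. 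For $D_4$, twist by triality.
\item Restrictions to parabolic subgroups, to separate the orbits.
\end{itemize}
Independence of the classes is checked by restricting to the rank-two parabolics $\langle r_i,r_j\rangle\cong$ dihedral of order $2m_{ij}$. There the single invariant $\varepsilon_{ij}$ is detected by the known $\ZZ_2$ multiplier of dihedral groups of order $4k$.

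\emph{Main obstacle.} I expect the lower bound in the exceptional and low-rank cases ($B_3$, $D_4$, $F_4$) to be the hardest step. There one must ensure that different sign patterns are genuinely independent and not killed by a hidden relation. My plan is to detect each orbit by restriction to a suitable reducible parabolic subgroup of the form $\ZZ_2\times\ZZ_2$ or $W(A_1)\times W(B_2)$, where the multiplier is directly computable via the Künneth formula.
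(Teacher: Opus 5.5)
The paper does not prove this theorem. It quotes Ihara--Yokonuma, points to Karpilovsky for an exposition, and reproduces only the generator-moving lemma (the analogue of Karpilovsky's Lemma 7.2.7), so your argument has to stand on its own.

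Your upper bound is the classical normalization argument, and your step 4 is exactly part (i) of that lemma. Single-edge moves do give the right orbit counts in every case. For instance, for $D_n$ with $n\ge5$ the orbits are the fork pair $\{r_{n-1},r_n\}$ and everything else, while in $D_4$ no move applies at all.

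Step 2, however, is wrong as written. After step 1 the only admissible rescalings are by $\pm1$. Rescaling $\tilde r_i$ and $\tilde r_j$ by a common $\lambda$ multiplies $\varepsilon_{ij}$ by $\lambda^{2m_{ij}}=1$, so a class-wide rescaling never changes $\varepsilon_{ij}$ when $r_i,r_j$ are conjugate. You must instead flip $\tilde r_j\mapsto-\tilde r_j$ vertex by vertex. This multiplies $\varepsilon_{ij}$ by $-1$ for odd $m_{ij}$, fixes every even $\varepsilon_{ij}$, and can be done consistently because the Coxeter graph is a tree.

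You should also state three facts explicitly:
\begin{enumerate}
\item the even-pair signs do not depend on the choice of lifts with $\tilde r_i^2=1$;
\item they multiply under Baer sum;
\item if all signs are $+1$, then $r_i\mapsto\tilde r_i$ is a splitting.
\end{enumerate}
Together these make $H^2(G,\CC^\times)\to\{\pm1\}^{\mathrm{orbits}}$ an injective homomorphism, which is what your upper bound actually needs.

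The genuine gap is in the lower bound. Your list of extensions does not produce a second independent class for $D_n$ with $n\ge5$, nor for $F_4$, since Pin only gives the all-$(-1)$ vector. For $D_4$ you do not say what triality is applied to.

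The fallback in your last paragraph cannot close this. Restricting to a parabolic $P$ only detects classes already defined on $G$. Computing $M(P)$ by K\"unneth says nothing about which classes of $P$ extend to $G$. Likewise, pulling back from $\ZZ_2$ is vacuous, since $H^2(\ZZ_2,\CC^\times)=0$; the right target is the abelianization $\ZZ_2\times\ZZ_2$.

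With that correction, your own tools suffice:
\begin{enumerate}
\item Take the nontrivial class of $\ZZ_2\times\ZZ_2$, with lifts $a,b$ satisfying $a^2=b^2=1$ and $ab=-ba$. Its pullback along $W(F_4)\to\ZZ_2\times\ZZ_2$ gives $-1$ on the commuting pairs and $+1$ on the $4$-edge.
\item The same pullback for $B_n$ gives $-1$ exactly on the pairs $\{r_k,r_n\}$ with $k\le n-2$.
\item Pull back a Schur double cover $\tilde S_n$ along $W(D_n)\to S_n$; under this map $r_{n-1}$ and $r_n$ go to the same transposition. The result gives $+1$ on the fork pair and $-1$ on all other commuting pairs.
\item For $D_4$, the triality images of the vector in item 3 are the three vectors with exactly two minus signs.
\end{enumerate}
Add Pin, which alone handles $E_n$, $H_3$, $H_4$ and $G_2^{(2k)}$, and the $B_n\to S_n$ pullback for $n\ge4$. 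These span $\{\pm1\}^{\mathrm{orbits}}$ in every case, with the signs read off rank-two parabolics as you propose. So the low-rank cases you flagged as the main obstacle become routine.
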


Miller \cite{Mi52} provides a combinatorial interpretation of the Schur multiplier considering the free group on pairs $\langle x, y\rangle$ with $x,y\in G$. We denote this group by $\langle G,G\rangle$ and take the kernel of the canonical map to the commutator group $\langle G,G\rangle\longrightarrow [G,G]$. The Schur multiplier is isomorphic to the following quotient
\[M(G)\cong \frac{\op{ker}\left(\langle G,G\rangle\longrightarrow [G,G]\right)}{N}\,,\]
where $N$ is the normal subgroup generated by the following four relations: 
 \begin{eqnarray}\label{four1}
		&\left<x,x\right>&\sim 1\,,\\\label{four2}
	&\left<x,y\right>&\sim \left<y,x\right>^{-1}\,,\\\label{four3}
	&\left<xy,z\right>&\sim \left<y,z\right>^x\left<x,z\right>\,,\\\label{four4}
	&\left<y,z\right>^x&\sim \left<x,[y,z]\right>\left<y,z\right>\,,
\end{eqnarray}
Miller also derived several further relations from these, which we list in the following theorem.
\begin{thm}[\cite{Mi52}]
The following relations can be deduced from \eqref{four1}-\eqref{four4}:
 \begin{eqnarray}\label{four5}
 &\left<x,yz\right>&\sim \left<x,y\right>\left<x,z\right>^y\,,\\\label{four6}
 &\left<x,y\right>^{\left<a,b\right>}&\sim \left<x,y\right>^{[a,b]}\,,\\\label{four7}
 & \left[\left<x,y\right>,\left<a,b\right>\right]&\sim \left<[x,y],[a,b]\right>\,,\\\label{four8}
 & \left<b,b'\right>\left<a_0,b_0\right> &\sim \left<[b,b'],a_0\right>\left<a_0,[b,b']b_0\right>\left<b,b'\right>\,,\\\label{four9}
 & \left<b,b'\right>\left<a_0,b_0\right> &\sim \left<[b,b']b_0,a_0\right>\left<a_0,[b,b']\right>\left<b,b'\right>\,,\\\label{four10}
 & \left<b,b'\right>\left<a,a'\right> &\sim \left<[b,b'],[a,a']\right>\left<a,a'\right>\left<b,b'\right>\,,\\\label{four11}
 &\left<x^n,x^s\right> &\sim 1\hspace{1cm}n=0,\pm1,\cdots;s=0,\pm1,\cdots\,,
 \end{eqnarray}for $x,y,z,a,b,a',b',a_0,b_0\in G$.
\end{thm}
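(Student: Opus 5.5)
The plan is to work throughout in the quotient group $\langle G,G\rangle/N$. I will use two structural facts repeatedly. First, $G$ acts on the free group $\langle G,G\rangle$ by $\langle y,z\rangle^x=\langle xyx^{-1},xzx^{-1}\rangle$, or the opposite convention, chosen to match \eqref{four3}. The relation set \eqref{four1}--\eqref{four4} is stable under this action, so the action descends to the quotient and commutes with inverses and products. Second, the degenerate cases of \eqref{four3} give $\langle 1,z\rangle\sim 1$ (take $x=y=1$), and hence, taking $y=x^{-1}$, a formula for $\langle x^{-1},z\rangle$ in terms of $\langle x,z\rangle$. I will fix the commutator convention $[x,y]=xyx^{-1}y^{-1}$ and check at each step that the conjugation convention is consistent with the boundary map $\langle x,y\rangle\mapsto[x,y]$.

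The easy relations come first.
\begin{itemize}
\item Relation \eqref{four5} is the mirror image of \eqref{four3}. Write $\langle x,yz\rangle\sim\langle yz,x\rangle^{-1}$ by \eqref{four2}, expand with \eqref{four3}, invert, and apply \eqref{four2} again, using that the $G$-action commutes with inversion.
\item Relation \eqref{four11} goes by a double induction. First show $\langle x,x^s\rangle\sim 1$ by induction on $s$, using \eqref{four5} and \eqref{four1}. Then handle $\langle x^{n},x^s\rangle$ by induction on $n$, using $\langle x\cdot x^{n},z\rangle\sim\langle x^n,z\rangle^x\langle x,z\rangle$. The action of $x$ fixes $x^s$, so the factor $\langle x^n,x^s\rangle^x=\langle x^n,x^s\rangle$ vanishes by induction. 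Negative exponents are handled by the inverse formula above.
\end{itemize}

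The heart of the matter is \eqref{four6}: inner conjugation by the element $\langle a,b\rangle$ agrees with the $G$-action of its image $[a,b]$. This is the crossed-module (Peiffer) identity, and I expect it to be the main obstacle.

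My approach to \eqref{four6} starts by reading \eqref{four4} as saying that the action of $x$ on $w=\langle y,z\rangle$ equals left multiplication by $\langle x,\partial w\rangle$. Applying this four times, once for each letter of $[a,b]=aba^{-1}b^{-1}$, writes $\langle x,y\rangle^{[a,b]}$ as a product of brackets times $\langle x,y\rangle$. I then use \eqref{four3} and \eqref{four5} to expand $\langle aba^{-1}b^{-1},[x,y]\rangle$ in the same way and compare the two expressions. The goal is to show that the accumulated prefix equals $\langle a,b\rangle\langle x,y\rangle\langle a,b\rangle^{-1}\langle x,y\rangle^{-1}$. If this bookkeeping becomes unwieldy, I will first establish the intermediate identity $\langle x,y\rangle^{g}\sim\langle g,[x,y]\rangle\langle x,y\rangle$ for arbitrary $g$ (this is \eqref{four4} read directly). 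I will then prove that the map $g\mapsto\langle g,c\rangle$ is a crossed homomorphism, via \eqref{four3}. Together these reduce \eqref{four6} to a short computation with $g=[a,b]$ written as a product of four generators.

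Once \eqref{four6} is available, the remaining relations are formal.
\begin{itemize}
\item For \eqref{four7}, write $[\langle x,y\rangle,\langle a,b\rangle]$ as $\langle x,y\rangle\cdot\bigl(\langle x,y\rangle^{-1}\bigr)^{\langle a,b\rangle}$. Convert the inner conjugation into the action of $[a,b]$ by \eqref{four6}, then apply \eqref{four4} together with \eqref{four2} to collapse the result to $\langle[x,y],[a,b]\rangle$.
\item Relation \eqref{four10} is \eqref{four7} rearranged.
\item For \eqref{four8} and \eqref{four9}, commute $\langle b,b'\rangle$ past $\langle a_0,b_0\rangle$ using \eqref{four6}. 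This produces $\langle a_0,b_0\rangle^{[b,b']}\langle b,b'\rangle$. Expanding the twisted bracket with \eqref{four3} gives \eqref{four8}, and expanding it with \eqref{four5} gives \eqref{four9}. In both cases the conjugation is absorbed through \eqref{four4}, which yields the displayed forms with $c=[b,b']$ appearing as $\langle c,a_0\rangle\langle a_0,cb_0\rangle$ or $\langle cb_0,a_0\rangle\langle a_0,c\rangle$.
\end{itemize}
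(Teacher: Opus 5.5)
The paper gives no proof of this theorem beyond the citation to Miller, so your argument has to stand on its own. Much of it does. The $G$-stability of $N$, the identity $\langle 1,z\rangle\sim 1$, \eqref{four5}, \eqref{four11}, and your formal reductions of \eqref{four7}, \eqref{four8}, \eqref{four10} to \eqref{four6} are fine; here $u^{v}=vuv^{-1}$ for $v$ in the group, the only convention compatible with the boundary map.

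\textbf{The gap: your route to \eqref{four6} is circular.} Expand $\langle aba^{-1}b^{-1},[x,y]\rangle$ in the first slot via \eqref{four3} and rewrite each factor $\langle g,c\rangle^{h}$ with \eqref{four4}. The product telescopes to $\langle x,y\rangle^{[a,b]}\langle x,y\rangle^{-1}$, which is just \eqref{four4} for $g=[a,b]$ again. Applying \eqref{four4} letter by letter gives the same thing, so your ``comparison'' and your crossed-homomorphism fallback produce a tautology.

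The element $\langle a,b\rangle$ never appears, because no step puts $a$ and $b$ in one bracket. Expanding the second slot with \eqref{four5} only creates brackets pairing a letter of $[a,b]$ with a letter of $[x,y]$. The natural way to force $\langle a,b\rangle$ in is $\langle[a,b],c\rangle\sim\langle c,[a,b]\rangle^{-1}\sim\langle a,b\rangle(\langle a,b\rangle^{c})^{-1}$. But this merely converts \eqref{four6} for $(x,y;a,b)$ into \eqref{four6} for $(a,b;x,y)$.

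\textbf{The missing idea} is to expand a bracket with products in \emph{both} slots, in the two possible orders. Using \eqref{four3} then \eqref{four5} gives $\langle xy,zw\rangle\sim\langle y,z\rangle^{x}\langle y,w\rangle^{xz}\langle x,z\rangle\langle x,w\rangle^{z}$. Using \eqref{four5} then \eqref{four3} gives $\langle xy,zw\rangle\sim\langle y,z\rangle^{x}\langle x,z\rangle\langle y,w\rangle^{zx}\langle x,w\rangle^{z}$. Cancelling the outer factors yields $\langle x,z\rangle\,u\,\langle x,z\rangle^{-1}\sim u^{[x,z]}$ for $u=\langle y,w\rangle^{zx}$. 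Since $u$ ranges over all generators, this is \eqref{four6}, obtained from \eqref{four2} and \eqref{four3} alone.

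\textbf{A second problem: \eqref{four9} as printed is not derivable}, so your claim to reach ``the displayed form'' is wrong. Taking $b=b'$ reduces it to $\langle a_0,b_0\rangle\sim\langle b_0,a_0\rangle$, i.e.\ $\langle a_0,b_0\rangle^{2}\sim 1$. Its image $[a_0,b_0]^{2}$ is a $3$-cycle for $a_0=(12)$, $b_0=(13)$ in $S_3$, so the relation fails.

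What your method actually shows is that the printed right-hand side is $\sim\langle b_0,a_0\rangle^{[b,b']}\langle b,b'\rangle\sim\langle b,b'\rangle\langle b_0,a_0\rangle$; this uses \eqref{four3} and \eqref{four2}. So the correct relation has $\langle b_0,a_0\rangle$ on the left. Equivalently, it reads $\langle b,b'\rangle\langle a_0,b_0\rangle\sim\langle[b,b']a_0,b_0\rangle\langle b_0,[b,b']\rangle\langle b,b'\rangle$.

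A minor point: you have the roles swapped. The right-hand side of \eqref{four8} splits its second slot, so it comes from \eqref{four5}. The right-hand side of \eqref{four9} splits its first slot, so it comes from \eqref{four3}.
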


Each of the Miller relations has a direct cut-and-paste low-dimensional interpretation; see \cite[\S1.2]{DS22}.

Dom\'inguez-Segovia \cite{DS22} showed that the toral classes associated to the generators of the Schur multiplier for dihedral and symmetric groups are as follows: 
\begin{itemize}
    \item[(i)] For dihedral groups ($A_2$, $D_2$, $G_2^{(n)}$ ($n\geq 5$)) with presentation $$D_{2n}=\langle a,b:a^2=1, b^2=1, (ab)^n=1\rangle\,,$$ 
for $n$ odd, we have $M(D_{2n})=0$ and for $n$ even of the form $n=2k$, the generator of $M(D_{4k})=\ZZ_2$ is $\langle c^k,a\rangle$ with $c=ab$.
\item[(ii)] For symmetric groups ($A_n$ ($n\geq 1$)) with presentation $$S_n=\langle r_1,\cdots, r_{n-1}: r_i^2=1, (r_ir_{i+1})^3=1, r_ir_j=r_jr_i \text{ for } |i-j|>1\rangle\,,$$
in the case $n\leq 3$ we have $M(S_n)=0$ and for $n\geq 4$, the generator of $M(S_n)=\ZZ_2$ is represented by any pair $\langle r_i,r_j\rangle$ with $|i-j|>1$.
\end{itemize}
Similarly, for finite Coxeter groups, there are two types of generators:
\begin{itemize}
    \item[(a)]Dihedral generators: $\langle (r_ir_j)^k,r_i\rangle$ with $m_{ij}=2k$, $k\geq 2$, where we can show using \eqref{four1}-\eqref{four11} that 
    $\langle (r_ir_j)^k,r_i\rangle\sim \langle r_ir_j,r_i\rangle^k$. This element is denoted in the diagram as follows 
    \vspace{.3cm}
    $$\xymatrix{ \sta{r_i}{\ci}\ar@{-}[r]_{2k}\ar@/^1.5pc/@{~}[r]  & \sta{r_j}{\ci}}$$
    \item[(b)]Symmetric generators: $\langle r_i,r_j\rangle$ with $m_{ij}=2$, where we can show using \eqref{four4} that
    $\langle r_i,r_j\rangle\sim \langle r_i,r_j\rangle^r$ for any $r$. This element is denoted in the diagram as follows 
    \vspace{.3cm}
    $$\xymatrix{ \sta{r_i}{\ci}\ar@{.}[r]\ar@/^1.5pc/@{~}[r]  & \sta{r_j}{\ci}}$$
\end{itemize}

In order to move among the generators we need the following lemma, which is analogous to \cite[Lem 7.2.7]{Kar87}.

\begin{lem}
\begin{itemize}
    \item[(i)] Assume that $r_i$, $r_j$ and $r_k$ are arranged in the following diagram:
    \vspace{.3cm}
    $$\xymatrix{ \sta{r_i}{\ci}\ar@{-}[r]_{m_{ij}} \ar@/^2pc/@{~}[rr]  & \sta{r_j}{\ci} \ar@{.}[r] \ar@/^1pc/@{~}[r]& \stackrel{r_{k}}{\circ}}$$
with $m_{ij}=2n_{ij}+1$. Then $\langle r_i,r_k\rangle\sim \langle r_j,r_k\rangle $.
\item[(ii)] Assume that $r_i$, $r_j$, $r_k$ and $r_l$ are arranged in the following diagram:
\vspace{.3cm}
$$\xymatrix{ \sta{r_i}{\ci}\ar@{-}[r]_{m_{ij}}\ar@/^1.5pc/@{~}[rr]  & \sta{r_j}{\ci} \ar@{-}[r]_{m_{jk}}\ar@/^1.5pc/@{~}[rr]& \stackrel{r_{k}}{\circ}\ar@{-}[r]_{m_{kl}} &\stackrel{r_{l}}{\circ}}\,.$$
with $m_{ij}=2n_{ij}+1$ and $m_{kl}=2n_{kl}+1$. Then $\langle r_i,r_k\rangle\sim \langle r_j,r_l\rangle $.
\end{itemize}
\end{lem}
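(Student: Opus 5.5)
Part (i) reduces to a conjugation: when $m_{ij}$ is odd, $r_i$ and $r_j$ are conjugate by an explicit element built from the dihedral relation. Part (ii) is then two applications of (i).

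For (i), write $m_{ij}=2n_{ij}+1$. In the dihedral group $\langle r_i,r_j\rangle$ the relation $(r_ir_j)^{2n+1}=1$ gives
\[
r_j = w\, r_i\, w^{-1}, \qquad w=(r_jr_i)^{n_{ij}},
\]
where $n=n_{ij}$. We check this directly: $(r_jr_i)^n r_i (r_ir_j)^n = (r_jr_i)^n r_i(r_ir_j)^{n}$, and the alternating word $r_j r_i r_j\cdots$ of length $2n+1$ equals $r_ir_j\cdots$ of length $2n+1$ by the braid relation. Moreover $r_k$ commutes with $r_j$ (since $m_{jk}=2$) and, in the configuration of the diagram, with $r_i$ as well (since $r_i$ and $r_k$ are not joined, so $m_{ik}=2$). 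Hence $r_k$ commutes with $w$. The key step is the following conjugation-invariance claim in Miller's presentation: if $w$ commutes with $z$, then
\[
\langle x,z\rangle^w \sim \langle wxw^{-1}, z\rangle .
\]
To derive it, expand $\langle wxw^{-1},z\rangle$ using \eqref{four3} twice, together with \eqref{four2} and $\langle w^{-1},z\rangle\sim\langle w,z\rangle^{-w^{-1}}$ (itself from \eqref{four3} applied to $\langle w w^{-1},z\rangle=\langle 1,z\rangle\sim 1$). The terms $\langle w,z\rangle$ and its inverse cancel up to conjugation. Since the elements involved lie in the central subgroup $M(G)$, conjugation acts trivially there. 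More precisely, \eqref{four6} and \eqref{four4} show that conjugating an element of the kernel by a generator changes nothing. Applying the claim with $x=r_i$, $z=r_k$ gives
\[
\langle r_j,r_k\rangle\sim\langle r_i,r_k\rangle^w\sim\langle r_i,r_k\rangle,
\]
where the last step is the remark in (b) that symmetric generators are invariant under conjugation.

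Topologically, this is just the statement that the toral class defined by a commuting pair $(x,z)$ depends only on the simultaneous conjugacy class of the pair. The torus map is changed by a basepoint-moving homotopy, so it represents the same homology class. I would present this cleanly as the observation that conjugating $f_\#:\ZZ\times\ZZ\to G$ by $w$ does not change $f_*[S^1\times S^1]$. Since $w$ fixes $r_k$ and sends $r_i$ to $r_j$, (i) follows at once.

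For (ii), use the same principle, now with a single element $u$ that simultaneously conjugates $r_i\mapsto r_j$ and $r_k\mapsto r_l$. The natural choice is $u=(r_jr_i)^{n_{ij}}(r_lr_k)^{n_{kl}}$ when the two factors commute. They do not commute in general, since $r_j$ and $r_k$ are adjacent, so instead I would compose two moves. First conjugate by $w_1=(r_jr_i)^{n_{ij}}$, which fixes $r_k$ provided $m_{ik}=2$; this sends $\langle r_i,r_k\rangle$ to $\langle r_j,r_k\rangle$. But $r_j$ and $r_k$ do not commute, so $\langle r_j,r_k\rangle$ is not a toral generator, and that intermediate step fails. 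So the alternative is to take one conjugating element directly, $u=(r_jr_i)^{n_{ij}}\cdot(r_lr_k)^{n_{kl}}$ ordered suitably, and verify that $u r_i u^{-1}=r_j$ and $u r_k u^{-1}=r_l$. This uses $m_{il}=2$ and the commutation of $r_i$ with $r_l$, $r_k$ with $r_i$, and so on. This verification is the main obstacle, because the presence of $r_j$–$r_k$ adjacency means the naive product may fail. If it does, I would instead use the chain $\langle r_i,r_k\rangle\sim\langle r_i,r_l\rangle$ via (i) applied to the triple $(r_l,r_k,r_i)$, then $\langle r_i,r_l\rangle\sim\langle r_j,r_l\rangle$ via (i) applied to $(r_i,r_j,r_l)$. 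Each step passes only through commuting pairs, since $m_{il}=2$, $m_{jl}=2$ and $m_{ik}=2$. This second route avoids the obstacle entirely, so I expect it to be the proof actually written.
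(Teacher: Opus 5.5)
Your argument is essentially the paper's. In (i) you conjugate the commuting pair $(r_i,r_k)$ by an element of $\langle r_i,r_j\rangle$ that carries $r_i$ to $r_j$ and centralizes $r_k$, then use the conjugation-invariance of symmetric generators coming from \eqref{four4}. In (ii), your fallback chain $\langle r_i,r_k\rangle\sim\langle r_i,r_l\rangle\sim\langle r_j,r_l\rangle$ (with \eqref{four2} to swap entries in the first step) is exactly the paper's single conjugation by $r=(r_ir_j)^{n_{ij}}(r_kr_l)^{n_{kl}}$, split into its two factors. The right factor fixes $r_i$ and sends $r_k\mapsto r_l$; the left factor sends $r_i\mapsto r_j$ and fixes $r_l$. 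Your topological remark is a valid alternative justification of the same invariance: a toral class depends only on the simultaneous conjugacy class of the commuting pair, by a free homotopy of the torus map.

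Some things need fixing.

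\emph{The conjugating element in (i) is inverted.} Write $c=r_ir_j$ and $n=n_{ij}$, so $c^{2n+1}=1$ and $r_ic^{\pm 1}r_i=c^{\mp 1}$. Then $c^{n}r_ic^{-n}=c^{2n}r_i=c^{-1}r_i=r_j$. Your $w=(r_jr_i)^{n}=c^{-n}$ instead gives $wr_iw^{-1}=c\,r_i=r_ir_jr_i\neq r_j$, already for $m_{ij}=3$. Your displayed ``check'' only repeats the same expression on both sides. Use $w^{-1}=(r_ir_j)^{n_{ij}}$, as the paper does. It still centralizes $r_k$, so nothing else in your argument changes.

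\emph{The ``main obstacle'' in (ii) does not exist.} The product $(r_ir_j)^{n_{ij}}(r_kr_l)^{n_{kl}}$, taken in that order, works without its factors commuting, for the factor-by-factor reason above. Your abandoned first attempt broke for a different reason than you gave. The intermediate pair is not the problem; rather, $(r_ij)$-type element $(r_ir_j)^{n_{ij}}$ need not fix $r_k$, because $r_k$ is joined to $r_j$. A conjugate of a commuting pair always commutes, so landing on the non-commuting pair $(r_j,r_k)$ was the signal of that error.

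\emph{Your ``key claim'' is definitional.} In the paper's usage, $\langle x,z\rangle^w$ denotes $\langle wxw^{-1},wzw^{-1}\rangle$; the paper writes $\langle r_i,r_k\rangle^r=\langle r_j,r_k\rangle$ as an equality. The detour through \eqref{four3} is therefore unnecessary. The only substantive input is \eqref{four4} with $[x,z]=1$.
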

\begin{proof}
For $(i)$, we have the hypothesis that $r_k$ commutes with $r_i$ and $r_j$. Set $r=(r_ir_j)^{n_{ij}}$, hence $\langle r_i,r_k \rangle\sim \langle r_i,r_k\rangle^r=\langle r_j,r_k\rangle$. For $(ii)$, we have the hypothesis that $r_i$ commutes with $r_l$ and $r_k$, and $r_l$ commutes with $r_i$ and $r_j$. Moreover, set $r=(r_ir_j)^{n_{ij}}(r_kr_l)^{n_{kl}}$, hence $rr_ir^{-1}=r_j$ and $rr_kr^{-1}=r_l$, so $\langle r_i,r_k \rangle\sim \langle r_i,r_k\rangle^r=\langle r_j,r_l\rangle$.
\end{proof}

For the list of irreducible finite Coxeter groups provided in Theorem \ref{IY}, we write down the generators for the Schur multiplier specified in Theorem \ref{IY}.

\begin{itemize}
    \item[(i)] These are precisely the symmetric groups.
    \item[(ii)] $B_2$ is the dihedral group $D_4$, and $B_3$ has the generators 
    \vspace{.3cm}
    $$\xymatrix{ \sta{r_1}{\ci}\ar@{-}[r]_{3}\ar@/_1.5pc/@{~}[rr]  & \sta{r_2}{\ci} \ar@{-}[r]_{4}\ar@/^1.5pc/@{~}[r]& \stackrel{r_{3}}{\circ}}\,$$

    \vspace{.3cm}
    
    \noindent given by $\langle r_1,r_3\rangle$ and $\langle (r_2r_3)^2,r_2\rangle$. $B_4$ has the generators 
    \vspace{.3cm}
    $$\xymatrix{ \sta{r_1}{\ci}\ar@{-}[r]_{3}\ar@/_1.5pc/@{~}[rr]  & \sta{r_2}{\ci} \ar@{-}[r]_{3}\ar@/^1.5pc/@{~}[rr]& \stackrel{r_{3}}{\circ}\ar@/_1.5pc/@{~}[r]\ar@{-}[r]_{4}&\stackrel{r_{4}}{\circ}}\,$$

    \vspace{.3cm}
    
    \noindent given by $\langle r_1,r_3\rangle$, $\langle r_2,r_4\rangle$ and $\langle (r_3r_4)^2,r_3\rangle$.
    \item[(iii)] $D_4$ has the generators 
    \vspace{.3cm}
    $$\xymatrix{\sta{r_1}{\ci}\ar@{-}[r]_3\ar@/^1.5pc/@{~}[rr] \ar@/_1pc/@{~}[rd]&\sta{r_2}{\ci}\ar@{-}[r]_3\ar@{-}[d]^3&\sta{r_4}{\ci}\\
&\underset{r_{3}}{\ci}\ar@/_1pc/@{~}[ru]&}$$
 given by $\langle r_1,r_4\rangle$, $\langle r_1,r_3\rangle$ and $\langle r_3,r_4\rangle$. $D_5$ has the generators
 \vspace{.3cm}
  $$\xymatrix{\sta{r_1}{\ci}\ar@{-}[r]_3&\sta{r_2}{\ci}\ar@{-}[r]_3\ar@/^1.5pc/@{~}[rr] &\sta{r_3}{\ci}\ar@{-}[r]_3\ar@{-}[d]^3&\sta{r_5}{\ci}\\
&&\underset{r_{4}}{\ci}\ar@/_1pc/@{~}[ru]&}$$
given by $\langle r_2,r_5\rangle$ and $\langle r_4,r_5\rangle$. Here the freedom of an additional arc with weight 3 allows the identification of two of the generators in the case of the group $D_4$.
 \item[(iv)] $E_6$, $E_7$ and $E_8$ each have only one generator.
 \item[(v)] $F_4$ has the generators
 \vspace{.3cm}
  $$\xymatrix{\sta{r_1}{\ci}\ar@{-}[r]_3&\sta{r_2}{\ci}\ar@/_1.5pc/@{~}[r]\ar@{-}[r]_4\ar@/^1.5pc/@{~}[rr]&\sta{r_3}{\ci}\ar@{-}[r]_3&\sta{r_4}{\ci}}$$

    \vspace{.3cm}
    
    \noindent given by $\langle r_2,r_4\rangle$ and $\langle (r_2r_3)^2,r_2\rangle$. 
\item[(vi)] These are the dihedral groups.
\item[(vii)] $H_3$ has the generator
\vspace{.3cm}
  $$\xymatrix{\sta{r_1}{\ci}\ar@{-}[r]_3\ar@/^1.5pc/@{~}[rr]&\sta{r_2}{\ci}\ar@{-}[r]_5&\sta{r_3}{\ci}}$$
given by $\langle r_1,r_3\rangle$.
\item[(viii)] $H_4$ has the generator
\vspace{.3cm}
  $$\xymatrix{\sta{r_1}{\ci}\ar@{-}[r]_3&\sta{r_2}{\ci}\ar@{-}[r]_3\ar@/^1.5pc/@{~}[rr]&
  \sta{r_3}{\ci}\ar@{-}[r]_5&\sta{r_4}{\ci}}$$
  given by $\langle r_2,r_4\rangle$.
\end{itemize}
We have obtained all the generators of the Schur multiplier associated to the list of irreducible finite Coxeter groups in Theorem \ref{IY} and, moreover, we have proved that the generators are represented by toral classes. Thus the Bogomolov multiplier $B(G)=0$ vanishes for any finite Coxeter group $G$.

\bibliographystyle{amsalpha}
\bibliography{refs}

\end{document}